% Time-stamp: <2026-03-22 17:30:58 valery> 
\documentclass{amsart}
\usepackage{va}

\usepackage{ifthen}
%\newboolean{springer}\setboolean{springer}{true}
\newboolean{springer}\setboolean{springer}{false}

%\newboolean{longversion}\setboolean{longversion}{true}
\newboolean{longversion}\setboolean{longversion}{false}

\usepackage{longtable}

\definecolor{amaranth}{rgb}{0.9, 0.17, 0.31}
\usepackage{hyperref}
\hypersetup{
  colorlinks=true,
  citecolor=amaranth,
  % linkcolor=blue, 
  linkcolor=black,
  filecolor=magenta,      
  urlcolor=blue,
}
 
\usepackage{tikz, tikz-cd} 
\usepackage{orcidlink}
\usepackage{bm}

\ifthenelse{\boolean{longversion}}{%
  \newcommand\presectionskip{\medskip}
}{%
  \newcommand\presectionskip{\medskip}
}

\numberwithin{equation}{section}

\let\oldtocsubsection=\tocsubsection
\renewcommand{\tocsubsection}[2]{\hspace{1em}\oldtocsubsection{#1}{#2}}

\newcommand\nuu{\nu}

\newcommand\bone{\bm{1}}
\newcommand\chr{\operatorname{char}}
\newcommand\Def{\operatorname{Def}}
\newcommand\Sym{\operatorname{Sym}}

\newcommand\ovval{\overline{\val}\, }

\newcommand\cym{\cE} 
\newcommand\cymm{E} 

\newcommand\pd{\partial}
\newcommand\newton{\Gamma_+}

\newcommand\mymod[1]{\,({\rm mod}\ #1)}
\newcommand\lct{\operatorname{lct}}

\newcommand\orb{_{\mathrm orb}}

\newcommand\dotcym{\dot\cym}
\newcommand\dotcX{\dot\cX}
\newcommand\amb{\bP}
\newcommand\amba{\bP_A}

\title[A tower of complete moduli spaces of Calabi-Yau $n$-folds]%
{A tower of complete moduli spaces\\of Calabi-Yau $n$-folds}
\date{March 21, 2026}
\author{Valery Alexeev\,\orcidlink{0000-0001-6730-0161}}
%\author{Valery Alexeev}
\email{valery@uga.edu}
\address{Department of Mathematics, University of Georgia, Athens GA
  30602, USA}
\subjclass[2020]{14D23, 14B07, 14J32}

\begin{document} 
\begin{abstract}
  We construct a sequence of complete moduli spaces
  $$\cymm_0 \subset \cymm_1 \subset \cymm_2 \subset \dots \cymm_n \subset
  \dots,$$ each of which is isomorphic to a weighted projective space. These spaces
  parameterize certain $n$-dimensional Calabi-Yau varieties
  associated with the Sylvester sequence $2,3,7,43,\dots$.  They
  generalize the moduli space of elliptic curves $\oM_{1,1}=\bP(4,6)$
  and Brieskorn's family over
  $\oF^{\rm BB}_{U\oplus E_8} = \bP(4,10,\dotsc, 42)$, the Baily-Borel
  compactification of the moduli space of $U\oplus E_8$-polarized K3
  surfaces.
  We also study fibrations in such Calabi-Yau varieties, extending to
  higher dimensions the theory of elliptic surfaces.
\end{abstract}
\maketitle

\setcounter{tocdepth}{1}
\tableofcontents

\section{Introduction}
\label{sec:intro}

Let $\{s_0, s_1, s_2, s_3, \dotsc \} = \{2, 3, 7, 43, \dotsc \}$ be
the Sylvester sequence defined recursively by the relation
$s_n=\prod_{k=0}^{n-1} s_k + 1$. Denote $d=\prod_{k=0}^n s_k$ and
$a_k=d/s_k$. Consider the Brieskorn-Pham hypersurface singularity
\begin{equation}\label{eq:brieskorn-pham} 
  ( f_0 =0 ) \subset \bC^{n+1}, \qquad
  f_0 = x_0^{s_0} + x_1^{s_1} + \dotsb + x_{n}^{s_{n}}.
\end{equation}
It is weighted homogeneous (also called quasihomogeneous) of degree $d$ if we
set $\deg x_k = a_k$. Consider its semiuniversal unfolding 
\begin{equation}\label{eq:f_t}
  f_t = f_0 + \sum_I t_I x^I, \quad
  t_I x^I = t_{i_0\dotsc i_n} x_0^{i_0}\dotsb x_n^{i_n}
  \quad\text{with } 0\le i_k \le s_k-2
\end{equation}
over $\bA^\mu$, where $\mu=\prod_{k=0}^n (s_k-1)$ is the Milnor number
(the same as the Tjurina number) of $f_0$.
For an $(n+1)$-tuple $I= (i_0,\dotsc, i_n)$, define
the \emph{degree difference} of $x^I$:
\begin{equation}
  \label{eq:degree-diff}
  \delta_I = d - \sum_{k=0}^n a_{k} i_k =
  d \Big( 1 - \sum_{k=0}^n \frac{i_k}{ s_k} \Big) \in \bZ.
\end{equation}
Because the $s_k$ are pairwise coprime, there are no monomials $t_Ix^I$ with
$\delta_I=0$. If all the monomials $t_Ix^I$ with $t_I\ne 0$ in $f_t$
have weighted degree $>d$, i.e.\ $\delta_I<0$, then $(f_t=0)$ is a
semi-quasihomogeneous singularity with the same Milnor number
$\mu(f_t)=\mu$. These monomials define the $\mu$-constant stratum
$W^+$ in the semiuniversal deformation of $f_0$, and $\dim W^+$ equals
the modality of $f_0$, see
\cite{arnold1974normal-forms,varchenko1982lower-bound}.  The space
$W^+$ is frequently referred to as the equisingular deformation space
of $f_0$. The hypersurfaces $(f_t=0)$ in this space are homeomorphic
to $(f_0=0)$: for $n\ge3$ this was proved in
\cite{le1976invariance-milnor, timourian1977invariance-milnor}, and
for \eqref{eq:brieskorn-pham} with $n=2$ in
\cite{wirthmuller1978thesis}.

The vector space $W^-$ generated by the monomials of degree $<d$, i.e.\ with
$\delta_I>0$, defines \emph{the smoothing component} of $f_0$.
The polynomials $f_t$ in this space are exactly those that
can be completed to homogeneous degree-$d$ polynomials
\begin{equation}\label{eq:F_t}
  F_t =\sum_{k=0}^n x_k^{s_k} + \sum_{I\in\cI_n} t_I x^I \cdot
  x_{n+1}^{\delta_I}
\end{equation}
by adding a variable $x_{n+1}$ of degree~$1$, with the multi-indices
$I$ going over the set
\begin{equation}
  \label{eq:cI_n}
    \cI_n =
    \{ (i_0, \dotsc, i_n) \mid 0\le i_k\le s_k-2,\ \delta_I > 0 \}.
\end{equation}
We will call the equation \eqref{eq:F_t} and its affine version with
$x_{n+1}=1$ \emph{the normal form} or \emph{a short Weierstrass
  equation}.  Indeed, it is the classical short Weierstrass equation
if $n=1$.  The variety $X_t = (F_t=0)$ is a degree-$d$ hypersurface in the
weighted projective space
\begin{equation}
  \label{eq:P-nplus1}
  \amb := \bP^{n+1}(a_0, \dotsc, a_n, 1).
\end{equation}
We call $D_t = (x_{n+1}=0)\subset X_t$  \emph{the
  infinite divisor}.

An elementary argument shows that a general such hypersurface $X_t$ is an
$n$-dimensional Calabi-Yau variety with fixed-type canonical
singularities along the infinite divisor. These Calabi-Yau varieties
appeared in the work of Esser-Totaro-Wang \cite{esser2022calabi-yau}
in connection with their extremal properties. Singh
\cite{singh2025smooth-calabi} showed that $\amb$ admits a crepant
resolution of singularities, inducing crepant resolutions
$\wX_t\to X_t$ by smooth Calabi-Yau varieties~$\wX_t$ uniformly for a
general $t$.

From now on, we will restrict ourselves to the smoothing component
$W^-$ and consider only monomials with $\delta_I>0$.  Rescaling
$x_{k}\to \lambda^{a_k} x_{k}$ for $0\le k\le n$ and
$x_{n+1}\to x_{n+1}$ gives
\begin{equation}\label{eq:rescaling}
  F_t \to \lambda^d\Big(  \sum_{k=0}^n x_k^{s_k} + \sum_I
  (\lambda^{-\delta_I}t_I)x^I \cdot x^{\delta_I}_{n+1} \Big).
\end{equation}
Thus, replacing coefficients $(t_I)$ by $\big(\lambda^{\delta_I}t_I\big)$
gives an isomorphic hypersurface in~$\amb$. Up to this $\bG_m$-action,
$X_t$ with $t\ne0$ corresponds to a point of a quotient stack
\begin{equation}
  \label{eq:cym_n}
  \cym_n := \left[ \left(\bA^{N_1+1}\setminus 0\right) / \bG_m \right],
  \qquad (t_I) \to (\lambda^{\delta_I}t_I).
\end{equation}
This quotient stack is a weighted projective stack
$\cym_n=\cP^{N_1}(\delta_I,\, I\in\cI_n)$, and its coarse
moduli space is an ordinary weighted projective space
$\cymm_n=\bP^{N_1}( \delta_I,\, I\in\cI_n)$.  In particular, every
hypersurface $X_t$ is a \emph{small deformation} of $X_0$, so any
property of $X_0$ that is open in the Zariski topology automatically holds
for all $X_t$.
The dimension of $\cym_n$ is $|\cI_n| - 1$.
 For $n=1,\ 2,\ 3,\ 4,\ 5$ one has
 $\dim \cym_n= 1$, $10$, $251$, $151700$, $123769377141$.  Sylvester numbers
 grow doubly-exponentially: $s_n\approx c^{2^{n+1}}$, where
 $c=1.264\dots$. %$c=1.2640847\dots$.
 Estimating as in~\eqref{eq:dim-cym}, we get
\begin{displaymath}
  \dim \cym_n \approx \frac{\mu}{(n-1)!} 
  \approx \frac{a}{(n-1)!} c^{2^{n+2}}, \quad
  \text{where } a=0.2789\dots %a=0.27891524\dots
\end{displaymath}

With $\amb$ and $\cym_n$ defined in \eqref{eq:P-nplus1} and \eqref{eq:cym_n}, our first result is:

\begin{maintheorem}\label{mainthm:family}
  For each $n\ge0$, working over $\bZ\big[1/\prod_{k=0}^n s_k \big]$,
  there is a flat family $\pi_n\colon (\cX_n,\cD_n)\to \cym_n$ of
  hypersurfaces $X\subset \amb$ together with $\bQ$-Cartier divisors
  $D\subset X$ in which every geometric fiber satisfies the following:
  \begin{enumerate}
  \item $X$ is an $n$-dimensional Gorenstein variety with a trivial
    dualizing sheaf $\omega_X$.
  \item $X\setminus D$ has only finitely many (none for a general $X$) isolated
    singularities.
  \item Along the infinite divisor $D$, $X$ has some
    $\mu_{\gcd(a_k,a_{k'})}$-quotient singularities, with $k\ne k'$, coming from the
    singularities of $\amb$, the same for every fiber.
  \item For $n\ne0$, $X$ is irreducible, and for $n\ne 1$, $X$ is normal.
  \end{enumerate}
  The relative dualizing sheaf of $\pi_n$ is
  $\omega_{\cX_n/\cym_n} = \pi_n^*\big(\cO(1)\big)$. Moreover, there
  is a natural injection $\iota_{n-1}\colon \cym_{n-1} \to \cym_n$.
  The restriction of the family $\pi_n$ to the boundary
  $\partial\cym_n := \iota_{n-1}(\cym_{n-1})$ and the family
  $\pi_{n-1}$ over $\cym_{n-1}$ are related in a natural way.
\end{maintheorem}

We give examples of the families $\pi_n$ and the embeddings
$\iota_{n-1}$ in Section~\ref{sec:En-1-En}.

For $n=1$, $\cym_1$ is the compactified moduli stack
$\overline{\cM}_{1,1}=\cP(4,6)$ of elliptic curves.

For $n=2$, generically $X$ is a K3 surface with $A_1A_2A_6$
singularities along the infinite divisor $D$. On the minimal
resolution $\wX$ the exceptional curves together with the strict
preimage of $D$ form a configuration of ten $(-2)$-curves spanning a
sublattice $\Lambda = U\oplus E_8\subset \Pic\wX$. The family $\pi_2$
was considered by Brieskorn
\cite{brieskorn1981unfolding-of-exceptional} who proved that
$\cymm_2 = \bP(4,10,\dotsc, 42)$ coincides with the Baily-Borel
compactification $\oF_\Lambda^{\rm BB}$ of the moduli space of
$U\oplus E_8$-polarized K3 surfaces.

The singularity $x_0^2+x_1^3+x_2^7$ is known as Arnold's exceptional
unimodal singularity $E_{12}$, and it also goes by the names
$S_{2,3,7}$ and $D_{2,3,7}$. By Arnold \cite{arnold1974normal-forms}
and Brieskorn \cite{brieskorn1979hierarchie}, its only deformations
are the cuspidal singularity $T_{2,3,7}$, the simple elliptic
singularity $\wE_8$, and $ADE$ singularities. In the language of
birational geometry, $S_{2,3,7}$ itself is not log canonical, while
$T_{2,3,7}$ and $\wE_8$ are strictly log canonical, and the $ADE$
singularities are canonical.  Our next result says that
this extends to dimension $n$. The singularity
\eqref{eq:brieskorn-pham} is not log canonical (cf.
Corollary~\ref{cor:brieskorn-pham-nonlc}), while its deformations
in the smoothing component satisfy the following:

%\va{can this be pushed to all deformations?}

\begin{maintheorem}\label{mainthm:singularities}
  Over $\bC$, for a geometric fiber $(X,D)$ of $\pi_n$ the
  following is true:
  \begin{enumerate}
  \item Near the infinite divisor $D\subset X$, $X$ has
    canonical singularities and the pair $(X,D)$ has log canonical
    singularities.
  \item For $[X] \in\partial\cym_n:=\iota_{n-1}(\cym_{n-1})$, $X$ has
    strictly log canonical singularities if $n\ne 1$, and a simple node
    (which is semi log canonical) if $n=1$.
  \item For $[X] \in\cym_n\setminus\partial\cym_n$, $X$ has canonical
    singularities. 
  \end{enumerate}
\end{maintheorem}

We give two moduli interpretations of the family $\pi_n$:

\begin{maintheorem}\label{mainthm:moduli-hypersurfaces}
  Over $\bZ\left[1/\prod_{k=0}^n s_k\right]$, $\cym_n$ is the moduli stack of
  Fermat-nondegenerate hypersurfaces of degree $d$ in $\amb$ modulo
  $\Aut(\amb, \bD)$, where $\bD=\{x_{n+1}=0\}$.
\end{maintheorem}

\begin{maintheorem}\label{mainthm:moduli-ksba}
  Over $\bC$, $\cym_n$ is a connected component of the moduli stack of
  KSBA-stable pairs $(X,D)$.
\end{maintheorem}

The moduli space $\cymm_n$ is a compactification of the open locus
$\cymm_n\setminus\partial\cymm_n$ parameterizing canonical
hypersurfaces $X$, and its boundary $\partial\cymm_n$ is tiny. In this way,
it is very similar to the Baily-Borel compactifications of the moduli
spaces $M_{1,1}$ and $F_\Lambda$ of elliptic curves and K3 surfaces
for $n=1$, $2$.
This observation prompts us to investigate Hodge theory of 
Calabi-Yau varieties appearing in the family $\pi_n$. Let 
\begin{equation}\label{eq:N_p}
  N_p = \# \big\{(i_0,\dotsc, i_n) \mid
   0\le i_k \le s_k-2, \ 
   p-1 < \sum_{k=0}^n \frac{i_k}{s_k} \le p \big\}.
 \end{equation}

 \begin{maintheorem}\label{mainthm:hodge-numbers}
   Let $[X]\in\cym_n$ be a Calabi-Yau variety such that $X\setminus D$ is
   smooth, and let $\wX\to X$ be a crepant resolution. Then
   $h^{p,p}(\wX) = h^{p,n-p}(\wX) = N_p$ except when $n$ is even and
   $p=n/2$, in which case $h^{p,p} = 2N_{p}$. In particular, for $n\ge3$
   $$h^1(T_\wX) = h^{n-1,1}(\wX) = N_1 = \dim\cym_n.$$
 \end{maintheorem}
 Note: by \cite{esser2022calabi-yau} one has $h^{p,q}(\wX)=0$ unless
 $p=q$ or $p+q=n$.
 
 \begin{maintheorem}\label{mainthm:kodaira-spencer}
   Assume $n\ge3$ and
   let $\wX$ be as in Theorem~\ref{mainthm:hodge-numbers}. Then the
   Kodaira-Spencer map $T_{\cym_n, [X]}\to H^1(T_\wX)$ is an isomorphism.
 \end{maintheorem}
 It is well known that for Calabi-Yau varieties the injectivity of the
 Kodaira-Spencer map implies a local Torelli theorem for the family $\pi_n$.

 \medskip

 The plan of the paper is as follows. In
 Section~\ref{sec:preliminaries} we recall basic facts about Sylvester
 numbers and about canonical and log canonical singularities.

 In Section~\ref{sec:family} we define the family $\pi_n$ and
 prove Theorem~\ref{mainthm:family}  and Theorem~\ref{mainthm:singularities}(1).

 In Section~\ref{sec:sings-nondegenerate} we prove
 Theorem~\ref{mainthm:singularities} about (log)canonical
 singularities in the special case when these singularities are
 Newton-nondegenerate, in which case this becomes a purely
 combinatorial property of their Newton polyhedra. This is a
 prerequisite for the general case and it illustrates what is so
 special about the Sylvester sequence.

 In Section~\ref{sec:fibered-varieties} we study varieties generically
 fibered in Calabi-Yau varieties from~$\cym_n$. For $n=1$ this is the
 classical theory of elliptic surfaces with a section, and the
 $n$-dimensional case turns out to be quite similar. We extend to
 dimension $n$ some classical results, such as: the minimal form, a
 criterion for canonical singularities, a formula for the log
 canonical threshold of a fiber, the canonical class formula, Tate's
 algorithm, and Kodaira's classification of degenerate elliptic fibers.

 In Section~\ref{sec:sing-general} we show that for any pair
 $[(X,D)]\in \cym_n$, its affine part $X\setminus D$ is fibered over
 $\bA^1$, with a generic fiber equal to the affine part of a Calabi-Yau
 hypersurface in $\cym_{n-1}$. Using induction and results of
 Section~\ref{sec:fibered-varieties}, we prove
 Theorem~\ref{mainthm:singularities} in general.

 In Section~\ref{sec:moduli} we prove
 Theorems~\ref{mainthm:moduli-hypersurfaces} and~\ref{mainthm:moduli-ksba}
 about the moduli interpretations of~$\cym_n$.

 Finally, in Section~\ref{sec:hodge-theory} we prove
 Theorems~\ref{mainthm:hodge-numbers},~\ref{mainthm:kodaira-spencer}
 and the local Torelli theorem.

 In Section~\ref{sec:wps} we work over $\bZ$, in
 Sections~\ref{sec:def-of-family},~\ref{sec:En-1-En}, and~\ref{sec:moduli-hypersurfaces} over $\bZ[1/\prod_{k=0}^n s_k]$, and
 in the rest of the paper over $\bC$.

\subsection*{Table of notations}

\renewcommand{\arraystretch}{1.2}
\begin{longtable}{ll}
  $s_k$ & Sylvester numbers $s_0=2$, $s_{n+1} = 1 + \prod_{k=0}^{n}
  s_k$ \\
  $f_0$ & the polynomial $\sum_{k=0}^{n} x_k^{s_k}$\\
  $\mu$ & Milnor number of $f_0$, equal to $\prod_{k=0}^n (s_k-1)$\\
  $d,\ d_n$  & weighted degree of $f_0$, equal to $\prod_{k=0}^n s_k$\\     
  $a_k,\ a_{n,k}$ & degree of $x_k$, equal to $d/ s_k$\\
  $I$ & multi-index $(i_0, \dotsc, i_n)$\\
  $\delta_I$ & degree difference of $x^I$, equal to $d - \sum_{k=0}^n a_{k} i_k =d \Big( 1 - \sum_{k=0}^n \frac{i_k}{
    s_k} \Big)$ \\
  $\cI_n$ & the index set
  $\{ (i_0, \dotsc, i_n) \mid 0\le i_k\le s_k-2,\ \delta_I > 0 \}$\\
  $\cI_n^+$ & the extended index set
  $\{ (i_0, \dotsc, i_n) \mid 0\le i_k\le s_k-1,\ \delta_I > 0 \}$\\
  $f_t$ & affine equation $\sum_{k=0}^{n} x_k^{s_k} + \sum t_Ix^I$\\
  $F_t$ & homogenization of $f_t$, equal to $\sum_{k=0}^{n} x_k^{s_k} +
  \sum t_Ix^I \cdot x_{n+1}^{\delta_I}$\\
  $\amb$ & weighted projective space $\bP(a_0, \dotsc, a_n, 1)$\\
  $\bD,\ \bD_t$ & ``infinite divisor'' $(x_{n+1}=0)\subset \amb$, isomorphic
  to $\bP^n$\\
  $X,\ X_t$ & degree-$d$ hypersurface $(F_t=0)\subset\amb$\\
  $D,\ D_t$ & ``infinite divisor'' $(x_{n+1}=0)\subset X$, isomorphic
  to $\bP^{n-1}$\\
  $\cym_n$ & weighted projective stack $\cP(\delta_I,\ I\in\cI_n)$\\
  $\cymm_n$ & coarse moduli space of $\cym_n$, a weighted projective space $\bP(\delta_I,\ I\in\cI_n)$\\
  $\pi_n$ & the universal family $(\cX_n,\cD_n) \to \cym_n$\\
  $\nuu_p$ & minimal reduced valuation for a family $X\to (C,p)$ over
  a curve
\end{longtable}

\ifthenelse{\boolean{springer}}{%
  \begin{acknowledgements}
    The author would also like to thank Jas Singh for useful
    references.
  \end{acknowledgements}

\subsection*{Statements and declarations}
  The author was partially supported by the NSF under DMS-2501855.
  The author declares no competing interests.
}
{%
  \begin{acknowledgements}
    The author was partially supported by the NSF under DMS-2501855.
    He would also like to thank Jas Singh for useful references.
  \end{acknowledgements}
}

\presectionskip
\section{Preliminaries}
\label{sec:preliminaries}

\subsection{Sylvester numbers}
\label{sec:sylvester-numbers}

The Sylvester sequence 
$\{s_0,s_1,s_2,s_3,\dotsc \} = \{2,3,7,43,\dotsc\}$ is defined by the
recursive relation
\begin{math}
  s_{n+1} = 1 + \prod_{k=0}^{n} s_k.
\end{math}

\begin{definition}\label{def:d-a-nk}
  Let
  \begin{equation}
    \label{eq:d-a-nj}
    d_n= \prod_{k=0}^ns_k, \qquad
    a_{n,k}=\frac{d_n}{s_k}=\prod_{\substack{0\le i\le n\\  i\ne k}} s_i.
    \ \text{ for } 0\le k\le n. 
  \end{equation}
  When $n$ is clear from context, we abbreviate these numbers as $d$ and
  $a_k$.
\end{definition}

%The following properties are well known.

\begin{lemma}\label{lem:sylvester-properties}
  $s_k$ are pairwise coprime, $\gcd(a_0,\dotsc,a_n) = 1$ if
  $n>0$. They satisfy
  \begin{eqnarray}
    \label{eq:egyptian}
    &\text{(Egyptian fraction identity)} &
          \sum_{k=0}^{n}\frac1{s_k} + \frac1{d} = 1
          \iff \sum_{k=0}^n a_{k} + 1 = d \\
    \label{eq:congruence}
    &\text{(Congruence relation)} & a_{k} \equiv -1\mymod{s_k}\\
    \label{eq:linear-relation}
    &\text{(The $\gcd$ relation)} & a_0 - \sum_{k=1}^n a_{k} =1
  \end{eqnarray}
\end{lemma}
\begin{proof}
  \eqref{eq:egyptian} is easily proved by induction on $n$.  From the
  defining equation for $s_k$, we have
  $\prod_{i<k} s_i \equiv -1\mymod{s_k}$ and $s_i \equiv 1\mymod{s_k}$
  for $i>k$, implying
  \eqref{eq:congruence}. \eqref{eq:linear-relation} follows from
  \eqref{eq:egyptian} and $d=2a_0$.
\end{proof}

\subsection{Canonical and log canonical singularities}
\label{sec:canonical-sings}

We refer the reader to standard sources such as
\cite{kollar1998birational-geometry, kollar2013singularities-mmp} for a discussion of
singularities appearing in the Minimal Model Program. For
the reader's convenience, we recall some of them below.

\begin{definition}\label{def:X-sings}
  Let $X$ be a normal variety and assume that the canonical class
  $K_X$ is $\bQ$-Cartier. For any (partial) resolution of singularities
  $f\colon Y\to X$ with normal variety $Y$ there is a natural formula
  \begin{displaymath}
    K_Y = f^*(K_X) + \sum a(X,E_i) E_i, \quad E_i: \text {$f$-exceptional divisors}.
  \end{displaymath}
  The numbers $a(X,E_i)\in\bQ$ are called discrepancies. One says that
  $X$ is terminal, resp.\ canonical, resp.\ klt, resp.\ log canonical
  (lc) if for any $f$ one has $a(X,E_i)>0$, resp.\ $a(X,E_i)\ge0$,
  resp.\ $a(X,E_i)>-1$, resp.\  $a(X,E_i)\ge-1$. Another way to define
  canonical singularities is to say that $f^*(K_X) \le K_Y$ for any
  $f$.
\end{definition}

For most of the paper, $K_X$ will be Cartier. Then $a(X,E_i)\in\bZ$ and klt
is equivalent to canonical. Canonical surface singularities are the
$ADE$ singularities (also called rational double points, or Du Val
singularities), and the strictly log canonical singularities are
simple elliptic and cusp singularities.
A particularly nice class of canonical singularities is those that 
admit crepant resolutions. We recall:

\begin{definition}\label{def:crepant}
  One says that a proper birational morphism $\phi\colon Y\to X$ is
  crepant if $K_Y = \phi^*(K_X)$.
\end{definition}

Definition~\ref{def:X-sings} extends to the pairs:

\begin{definition}
  Let $X$ be a normal variety again, and let $B=\sum b_jB_j$ be a
  $\bQ$-Weil divisor with $b_j\in\bQ$, $b_j\le 1$.
  The divisor $B$ is frequently called a sub-boundary, and it is
  called a boundary if all $b_j\ge0$.
  Assume that $K_X+B$ is $\bQ$-Cartier. For any (partial) resolution of
  singularities $f\colon Y\to X$ there is a natural formula
  \begin{displaymath}
    K_Y + f_*\inv(B) = f^*(K_X+B) + \sum a(X,B,E_i) E_i, \quad E_i: \text {$f$-exceptional divisors}.
  \end{displaymath}
  One says that the pair $(X,B)$ is klt, resp.\ lc if for any $f$ one
  has $b_j<1$ and $a(X,B,E_i)>-1$, resp.\ $b_j\le 1$ and
  $a(X,B,E_i)\ge -1$.  Another way to define log canonical
  singularities is to say that for any $f$ one has
  $$f^*(K_X+B) \le K_Y + f_*\inv(B) + \sum E_i.$$
\end{definition}

There is a generalization of log canonical singularities to the nonnormal case:
\begin{definition}\label{def:slc}
  The pair $(X,B)$ is semi log canonical (slc) if $X$ satisfies
  Serre's condition $S_2$ and has only ordinary double crossing
  singularities in codimension~$1$, $K_X+B$ is $\bQ$-Cartier, and for
  the normalization $\nu\colon X^\nu\to X$ with the double locus
  $D^\nu$, the pair $(X^\nu, D^\nu+\nu_*\inv(B))$ is lc.
\end{definition}

We will also need the following notion:
\begin{definition}\label{def:lct}
  Let $X$ be a $\bQ$-Gorenstein variety and $D$ an effective $\bQ$-Cartier
  divisor on $X$. The log canonical threshold is
  \begin{displaymath}
    \lct(X,D) = \sup \{t\in\bR \mid (X,tD) \text{ is lc} \},
  \end{displaymath}
  with the convention that $\sup\emptyset=-\infty$. Note that it can
  be negative.
\end{definition}
Here are some basic properties that we will use, all of them well known.

\begin{theorem}[Inversion of Adjunction,
  \cite{kawakita2007inversion-of-adjunction}]
  \label{thm:inversion-of-adjunction}
  Let $X$ be a normal variety, $D$ a reduced Weil divisor,
  and $B=\sum b_iB_i$ a $\bQ$-divisor with $B_i$ and $D$ without
  irreducible components in common. Assume that $K_X+D+B$ is
  $\bQ$-Cartier and that $D$ is Cartier in codimension~$2$ on
  $X$. Then
  % Then there exists an effective $\bQ$-divisor $\Diff$
  % on $D$ called the \emph{different}
  % with coefficients $0\le c_k\le 1$ such that
  % \begin{displaymath}
  %   (K_X+D+B)|_D = K_D + \Diff + B|_D
  % \end{displaymath} and
  $(X,D+B)$ is lc in a neighborhood of $D$ iff $(D, B|_D)$
  is slc. %If $D$ is Cartier in codimension~$2$ in $X$ then $\Diff=0$.
\end{theorem}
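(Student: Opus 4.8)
This is Kawakita's theorem and we invoke it as a black box; still, here is the shape of a proof. The asserted equivalence has a soft half and a hard half, and both are cleaner to phrase once one observes that, because $D$ is Cartier in codimension~$2$ on $X$, the different $\Diff_{D^\nu}(0)$ on the normalization $\nu\colon D^\nu\to D$ vanishes in codimension~$1$. Hence adjunction reads $(K_X+D+B)|_{D^\nu}=K_{D^\nu}+\nu_*\inv(B|_D)$, and the slc-ness of $(D,B|_D)$ is by definition the lc-ness of $(D^\nu,\nu_*\inv(B|_D))$.

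\emph{The soft direction} ($(X,D+B)$ lc near $D$ $\Rightarrow$ $(D,B|_D)$ slc). Take a log resolution $f\colon Y\to X$ of $(X,D+B)$ that is an isomorphism over the generic points of the components of $D$, with $\wt D\cup\operatorname{Exc}(f)\cup f_*\inv(B)$ simple normal crossing, and write $K_Y+\wt D+f_*\inv(B)+\sum E_i = f^*(K_X+D+B)+\sum(a_i+1)E_i$ with all $a_i\ge-1$ (possible after shrinking $X$ around $D$). Since $D$ has coefficient~$1$ in $D+B$, restricting this identity to the strict transform $\wt D$ and using $K_{\wt D}=(K_Y+\wt D)|_{\wt D}$ realizes $\wt D\to D^\nu$ as a resolution along whose exceptional divisors the discrepancies of $(D^\nu,\nu_*\inv(B|_D))$ are among the $a_i\ge-1$; passing to a further resolution if necessary, that pair is lc, i.e. $(D,B|_D)$ is slc. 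Nothing here goes beyond bookkeeping with a log resolution.

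\emph{The hard direction} ($(D,B|_D)$ slc $\Rightarrow$ $(X,D+B)$ lc near $D$), by contradiction. Assume $(D^\nu,\nu_*\inv(B|_D))$ is lc but $(X,D+B)$ is not lc in any neighborhood of $D$; then there is a prime divisor $E$ over $X$ with $a(E;X,D+B)<-1$ and $c_X(E)\cap D\ne\emptyset$. One would like to choose a log resolution extracting $E$, restrict to $\wt D$, and read off via adjunction a divisor over $D^\nu$ of discrepancy $<-1$ for $(D^\nu,\nu_*\inv(B|_D))$ --- contradicting lc-ness. The obstruction is that the center of $E$ on the resolution need not meet $\wt D$: it may be attached to $D$ only through a chain of $f$-exceptional divisors. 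This is precisely what the Kollár--Shokurov connectedness principle (a consequence of Kawamata--Viehweg vanishing, in the sharper form valid for lc, not merely klt, thresholds) rules out: near any point of $D$, the locus where $(X,D+B)$ fails to be lc is connected through $D$ along centers of discrepancy $\le-1$. Combined with a suitable ``tie-breaking'' perturbation of $B$ --- arranged so that $\wt D$ and $E$ are the only non-klt divisors of the pair over a neighborhood of $D$ --- connectedness forces $E$ to meet $\wt D$ on the resolution, and restriction to $\wt D$ then produces the required contradiction.

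The genuine content is exactly this propagation step, together with the \emph{threshold case} in which $(X,D+B)$ is already lc and the equivalence is sharp; there one cannot perturb freely, and Kawakita's argument instead runs a minimal model program (and uses the Ambro--Fujino theory of quasi-log canonical structures and connectedness of the non-lc locus for lc pairs) to pin down the minimal log discrepancy along $D$. The reductions --- $X$ normal is given, $D$ normal is achieved by passing to $D^\nu$, the different is trivial by hypothesis, and one may assume $B$ effective --- are routine, which is why we simply quote \cite{kawakita2007inversion-of-adjunction}.
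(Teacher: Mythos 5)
The paper gives no proof of this statement at all --- it is quoted verbatim as Kawakita's theorem --- and your proposal likewise ultimately invokes \cite{kawakita2007inversion-of-adjunction} as a black box, so the two agree in approach; your sketch of the easy direction via adjunction and of the hard direction via connectedness and the MMP is a fair summary of the standard strategy. One small imprecision: when $D$ is non-normal the different on $D^\nu$ does not vanish but contains the conductor with coefficient $1$, which is precisely why the paper's definition of slc compares against the pair $(D^\nu, D^{\nu,\mathrm{cond}}+\nu_*\inv(B|_D))$ rather than $(D^\nu,\nu_*\inv(B|_D))$ alone.
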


% \begin{corollary}\label{cor:lct}
%   Let $X$ be a smooth variety and $D$ an effective $\bZ$-divisor. Then
%   $D$ is slc iff $\lct(X,D)=1$.
% \end{corollary}

\begin{theorem}
%  [e.g.\ {\cite[Lemma 2.3]{alexeev2012non-normal-abelian}}]
  \label{thm:lc-under-maps}
  Let $f\colon Y\to X$ be a generically finite surjective proper
  morphism of normal varieties. Let $B$ be a $\bQ$-divisor such that
  $K_X+B$ is $\bQ$-Cartier, and define the divisor $B^Y$ on $Y$ by the
  formula $K_Y+B^Y = f^*(K_X+B)$. Then $(X,B)$ is log canonical iff
  $(Y,B^Y)$ is log canonical.
\end{theorem}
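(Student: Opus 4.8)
The plan is to reduce the statement to a comparison of log discrepancies, one divisorial valuation at a time. Recall that for a normal variety $X$ with $K_X+B$ a $\bQ$-Cartier $\bQ$-divisor, the pair $(X,B)$ is log canonical if and only if for every divisorial valuation $v$ of the function field $K(X)$ the log discrepancy $a_\ell(v;X,B):=a(v;X,B)+1$ is nonnegative: running $v$ over the valuations along the components of $B$ recovers the condition $b_j\le1$, and over the exceptional valuations the condition $a_i\ge-1$. Since $K_X+B$ is $\bQ$-Cartier, so is its pullback $K_Y+B^Y=f^*(K_X+B)$, hence $(Y,B^Y)$ is a genuine pair and the same criterion applies to it. Because $f$ is generically finite, $K(Y)/K(X)$ is a finite field extension; so restriction sends every divisorial valuation $w$ of $K(Y)$ to a divisorial valuation $v:=w|_{K(X)}$ of $K(X)$, and conversely every divisorial valuation of $K(X)$ admits at least one divisorial extension to $K(Y)$. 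Write $e=e(w/v)\ge1$ for the corresponding ramification index. For cleanliness I would first apply Stein factorization to write $f$ as a proper birational morphism followed by a finite one and treat the two cases separately: the birational case is immediate since then $K(Y)=K(X)$ and the claim reduces to the fact that crepant pullbacks compose, so the genuinely new content is in the finite case.

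The crux is the identity
\[
  a_\ell(w;Y,B^Y)=e(w/v)\cdot a_\ell(v;X,B),\qquad v=w|_{K(X)}.
\]
To establish it, fix a log resolution $\mu\colon X'\to X$ on which $v=\operatorname{mult}_E$ for a prime divisor $E\subset X'$, and write $\mu^*(K_X+B)=K_{X'}+B'$, so that $a_\ell(v;X,B)=1-\operatorname{mult}_E(B')$. Then choose a proper birational $\nu\colon Y'\to Y$ together with a morphism $g\colon Y'\to X'$ lying over $f$ such that $w=\operatorname{mult}_F$ for a prime divisor $F\subset Y'$; since $w$ restricts to $v$, the divisor $F$ dominates $E$ under $g$, with ramification index $e$. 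Pulling back gives $K_{Y'}+\widetilde B=g^*(K_{X'}+B')$, hence $\widetilde B=g^*B'-(K_{Y'}-g^*K_{X'})$ and $a_\ell(w;Y,B^Y)=1-\operatorname{mult}_F(\widetilde B)$. Now take $\operatorname{mult}_F$ of the two terms: $F$ dominates the component $E$ of $B'$ with index $e$ and dominates no other component (those are prime divisors on the smooth $X'$, hence $\ne E$), so $\operatorname{mult}_F(g^*B')=e\cdot\operatorname{mult}_E(B')$; and $K_{Y'}-g^*K_{X'}$ is the ramification divisor of $g$ along divisors dominating divisors of $X'$, which is tame since we are in characteristic $0$, so its multiplicity along $F$ is $e-1$. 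Combining, $\operatorname{mult}_F(\widetilde B)=e\cdot\operatorname{mult}_E(B')-(e-1)$, i.e. $1-\operatorname{mult}_F(\widetilde B)=e\,(1-\operatorname{mult}_E(B'))$, which is the identity.

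Granting this, the theorem falls out: if $(X,B)$ is log canonical then $a_\ell(v;X,B)\ge0$ for all divisorial $v$ over $X$, hence for any $w$ over $Y$ one has $a_\ell(w;Y,B^Y)=e\cdot a_\ell(w|_{K(X)};X,B)\ge0$, so $(Y,B^Y)$ is log canonical; conversely, if $(Y,B^Y)$ is log canonical, then for each divisorial $v$ over $X$ pick an extension $w$ over $Y$ and use $a_\ell(v;X,B)=e^{-1}a_\ell(w;Y,B^Y)\ge0$, so $(X,B)$ is log canonical. I expect the main obstacle to be the birational bookkeeping behind the identity: constructing the simultaneous model $g\colon Y'\to X'$ carrying prescribed prime divisors $F$ and $E$ with $g(F)=E$, and dealing with the fact that a generically finite $g$ between blown-up models can contract divisors, so that $K_{Y'}-g^*K_{X'}$ agrees with the ramification divisor only along divisors dominating divisors downstairs. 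The cleanest way around the latter is exactly the Stein reduction above: once $f$ is finite one may take $Y'$ to be a resolution of the normalization of the fiber product $Y\times_X X'$, which is finite over $X'$ and regular near the generic point of $F$, so that $g$ is finite there and the tame ramification formula $\operatorname{mult}_F(K_{Y'}-g^*K_{X'})=e-1$ applies verbatim.
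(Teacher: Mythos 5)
The paper states this result in its Preliminaries as a well-known fact and gives no proof of its own (it is \cite[Prop.~5.20]{kollar1998birational-geometry} or Prop.~3.16 of \cite{kollar1997singularities-of-pairs}), and your argument is correct and is essentially that standard proof: Stein factorization, then the log-discrepancy identity $a_\ell(w;Y,B^Y)=e(w/v)\,a_\ell(v;X,B)$ obtained from Riemann--Hurwitz on a common model. The only wording I would tighten is the construction of $Y'$: taking $Y'$ to be the normalization of $X'$ in $K(Y)$ already realizes $w$ as $\operatorname{mult}_F$ for $F$ the (codimension-one, by finiteness and normality) center of $w$, so no further resolution is needed and $g$ is genuinely finite.
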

\begin{proof}
  If $f$ is birational, this follows directly from the definition. For
  a finite morphism, this is
  \cite[Cor.~2.2]{shokurov1992three-dimensional} and
  \cite[Prop.~5.20(4)]{kollar1998birational-geometry}. The general
  case is a combination of these two cases and the Stein factorization.
\end{proof}

\begin{corollary}\label{cor:quotient-of-klt}
  A quotient $Y=X/G$ of a log canonical (resp.\ klt) variety by a
  finite group is log canonical (resp.\ klt). Similarly, a quotient
  $(Y,D_Y) = (X,D_X)/G$ of a log canonical pair with a reduced divisor
  $D_Y$ is log canonical.
\end{corollary}
\begin{proof}
  Let $f\colon X\to Y$ be the quotient map. One has
  $K_X = f^*(K_Y+B)$, where $B=\sum \frac{n_i-1}{n_i} B_i$ is the branch
  divisor. By Theorem~\ref{thm:lc-under-maps}, the pair $(Y,B)$ is log
  canonical (resp.\ klt), hence so is $(Y,0)$ by monotonicity since
  $B\ge 0$. For the pairs, the proof is similar, using the formula
  $K_X + D_X = f^*(K_Y+D_Y+B)$.
\end{proof}

\presectionskip
\section{The family $\pi_n$ and its basic properties} 
\label{sec:family}

\subsection{Weighted projective space and weighted projective stack}
\label{sec:wps}

We will use several versions of a weighted projective space in this paper:
\begin{enumerate}
\item A weighted projective space $\amb=\bP(a_0,\dotsc, a_n,1)$.
\item A weighted projective stack $\cym_n=\cP\big(\delta_I)$, for
  $I\in\cI_n$ defined in \eqref{eq:cI_n}.
\item A weighted projective space $\cymm_n=\bP\big(\delta_I,\ I\in\cI_n)$.
\item A weighted projective $\amb$-bundle
  $\amb(V)=\Proj\,\Sym^*(V)\to S$ over a scheme.
\item A weighted projective $\amb$-bundle
  $\amb(\cV)=\Proj\,\Sym^*(\cV)\to \cym_n$ over a stack. 
\end{enumerate}

The most basic reference for $\Proj\,\Sym^*(V)$ is \cite{EGA2}. Some
convenient references for weighted projective spaces include
\cite{dolgachev1982weighted-projective,
  fletcher2000working-with-weighted} over a field and
\cite{delorme1975espaces-projectifs,amrani1989classes-ideaux} over a general ring. Likewise,
weighted projective stacks have been well discussed in the
literature. One convenient reference is
\cite{abramovich2011stable-varieties}.

\subsubsection{$\amb=\bP(a_0,\dotsc, a_n,1)$}

For any commutative ring $A$ with identity and positive integers
$b_0, \dotsc, b_N$, a weighted projective space $\bP_A(b_k)$
over $A$ is defined as $\Proj A[x_0, \dotsc, x_N]$, with the grading
given by $\deg x_k=b_k$ for $0\le k\le N$.  Like any $\Proj$, it comes
with the sheaves $\cO(m)$ for all $m\in\bZ$.  It is \emph{well-formed}
if after removing any one weight, the $\gcd$ of the remaining weights
is $1$.

\begin{lemma}[\cite{amrani1989classes-ideaux}, Sections 2 and 4]
  \label{lem:wps-U}
  Assuming well-formedness, the smooth locus
  $i\colon U\subset \bP_A(b_k)$ of
  $\bP_A(b_k)\to\Spec A$ is the maximal open subset such that
  $\cO_{U}(1)$ is invertible and $\cO_{U}(m) = \cO_{U}(1)^{\otimes m}$
  for all $m$. One has $\codim \bP_A(b_k)\setminus U\ge 2$ and
  $i_*\cO_U(m) = \cO(m)$.  The sheaf $\cO(m)$ is invertible iff all
  $\deg x_k$ divide $m$.
\end{lemma}

\begin{lemma}[\cite{amrani1989classes-ideaux}, 6.2]
  \label{lem:wps-cohs}
  Assuming well-formedness, the canonical homomorphism
  \begin{equation}
    \label{eq:wps-sectionring}
    A[x_0, \dotsc, x_N] \to 
    \bigoplus_{m\in\bZ} H^0(\bP_A(b_k), \cO(m))
  \end{equation}
  is an isomorphism of graded $A$-algebras. One has
  \begin{eqnarray*}
    &H^i\big(\bP_A(b_k), \cO(m)\big) = 0\quad\text{for}\quad i\ne 0,\ N
    \\
    &H^{N}\big(\bP_A(b_k), \cO(m)\big) =
      H^0\big(\bP_A(b_k), \cO(-m -\sum_{k=0}^N b_k)\big)^\vee.
  \end{eqnarray*}
\end{lemma}

We define $\amba:=\bP_A(a_0,\dotsc, a_n,1)$. For $n>0$, it is well-formed since
$\gcd(a_k)=1$. 

\begin{lemma}\label{lem:P-Gorenstein}
  $\amba$ is Gorenstein over $\Spec A$. The dualizing sheaf
  $\omega_{\amba/A}\simeq\cO(-d)$ is invertible and relatively anti-ample.
\end{lemma}
\begin{proof}
  By the Egyptian fraction identity~\eqref{eq:egyptian}, one has
  $$\omega_{\amba/A} = \cO\big(-\sum_{k=0}^n a_k-1\big) = \cO(-d)$$
  This is an invertible sheaf since all $a_k$ divide $d$, and it is
  clearly anti-ample.
\end{proof}

\begin{lemma}\label{lem:bD-in-bP}
  The infinite divisor $\bD_A = (x_{n+1}=0) \subset \amb_A$ is
  isomorphic to an ordinary projective space $\bP^n_A$.
\end{lemma}
\begin{proof}
  $\bD_A = \bP_A(a_0, \dotsc, a_n)$ is not well-formed since
  $\gcd(a_i,\ i\ne k)=s_k$. Then
  $\bP_A(a_0,a_1, \dotsc, a_n) = \bP_A(a_0, a_1/s_0, \dotsc, a_n/s_0)
  = \dotsb =\bP_A(1,\dotsc,1)$. Explicitly, the $d$-th Veronese
  subalgebra $A[x_0,\dotsc, x_n]^{(d)}$ is isomorphic to
  $A[u_0,\dotsc, u_n]$, with the generators $u_k= x_k^{s_k}$. So 
  $\bP(a_0, \dotsc, a_n) \simeq \Proj A[x_0,\dotsc, x_n]^{(d)} = \bP^n_A$.
\end{proof}

\subsubsection{$\cP(\delta_I)$}

The weighted projective stack ${\cym_n}$ is defined as the tame
quotient stack
\begin{equation}
  \label{eq:wpstack}
  {\cym_n}:= \cP(\delta_I) = \left[ \left(\bA^{N_1+1}\setminus 0\right) / \bG_m
  \right],
  \qquad (t_I) \to (\lambda^{\delta_I}t_I),
\end{equation}
with $I\in\cI_n$ as in~\eqref{eq:cI_n}.  For any $m\in\bZ$ there is an
invertible sheaf $\cO(m) = \cO(1)^{\otimes m}$ on ${\cym_n}$. This
definition works over any base scheme, for example over
$\Spec\bZ$. 

\begin{lemma}\label{lem:generic-inertia}
  The generic inertia group of the stack $\cym_n$ is $\mu_2$. 
\end{lemma}
\begin{proof}
  The generic inertia group of a weighted projective stack
  $\cP(\delta_I)$ is $\mu_g$, where $g = \gcd(\delta_I)$. Since for
  the multi-indices $I$ of \eqref{eq:F_t} one has $0\le i_k\le s_k-2$,
  $s_0=2$ and $s_k>2$ for $k>0$, only $a_k$ for $k>0$ appear in the
  definition \eqref{eq:degree-diff} of $\delta_I$. Thus,
  \begin{displaymath}
    \gcd(\delta_I) = \gcd(a_k,\ k>0) = s_0 \gcd(s_k,\ k>0) = s_0 = 2.
  \end{displaymath}
  The statement follows.
\end{proof}

\subsubsection{$\bP(\delta_I)$}

The coarse moduli space $\cymm_n$ of $\cym_n$ is an ordinary
projective space $\bP(\delta_I,\ I\in\cI_n)$.  There is a natural map
$\cP(\delta_I)\to\bP(\delta_I)$, and for all $m\in\bZ$ one has
$p_*\cO_{\cP(\delta_I)}(m) = \cO_{\bP(\delta_I)}(m)$.

\subsubsection{$\amb(V)$}
\label{sec:bPV}

Let $L$ be a line bundle on a scheme $S$. By analogy with elliptic
surfaces, we will call it a \emph{fundamental line bundle}. We define
a graded vector bundle $V = \oplus_{k=0}^{n+1}V_k$, where
$V_k=L^{-a_k}$ for $0\le~k\le~n$ is a line bundle of weight $a_k$ and
$V_{n+1}=\cO_S$ has weight~$1$.  The $\amb$-bundle $\amb(V)$ over a
scheme $S$ is defined by choosing an open cover $\{U_i = \Spec A_i\}$
on which $L$ trivializes and gluing $\amb_{A_i}$'s.

Alternatively, $\amb( V) \simeq \amb( V')$ for
$ V' = \oplus_{k=0}^{n+1} V'_k$, where $V'_k = \cO_{S}$ for
$0\le k\le n$ and $V'_{n+1} = L\inv$, i.e.\ 
$ V_k = V'_k \otimes L^{\deg V_k}$. One has
$\cO_{\amb( V)}(1) = \cO_{\amb( V')}(1) \otimes \pi^*( L)$.

\subsubsection{$\amb(\cV)$}

Finally, there is a straightforward extension of the
$\Proj\,\Sym^*(V)$ construction from the base scheme to a base stack,
such as $\cym_n$, explained for example in
\cite[\S14.3]{laumon2000champs-algebriques}.  We take
$\cV = \oplus_{k=0}^{n+1}\cV_k$ to be a graded vector bundle on
$\cym_n$, where $\cV_k=\cO_{\cym_n}(-a_k)$ for $0\le~k\le~n$ with
$\deg\cV_k=a_k$ and $\cV_{n+1}=\cO_{\cym_n}$ with $\deg\cV_{n+1}=~1$.
This $\amb$-bundle is
locally trivial in fppf topology.  Over $\bQ$ it is also locally
trivial in \'etale topology since ${\cym_n}$ can be also written as a
quotient stack $[\bP^{N_1} / \prod_I \mu_{\delta_I}]$ and in
characteristic $0$ the groups $\mu_{\delta_I}$ are \'etale.

\subsection{The family of hypersurfaces}
\label{sec:def-of-family}

\begin{definition}
  \label{def:family-hypersurfaces}
  For any line bundle $L$ on a scheme $S$, let $\pi\colon\amb(V)\to S$
  be the weighted $\bP$-fibration over $S$ as in the previous section.
  For any global sections $t_I\in H^0\big(S, L^{\delta_I}\big)$, the
  polynomial $F_t$ of Equation~\eqref{eq:F_t} defines a hypersurface
  $\cX\subset \bP(V)$, for which $x_k$ and $F_t$ are sections on
  $\amb( V)$ of
\begin{displaymath}
    x_k:\,  \cO(a_k) \otimes \pi^*\big( L^{a_k}\big)
    \text{ for } 0\le k\le n, \quad
    x_{n+1}:\,  \cO(1),
    \quad
    F_t:\, \cO(d) \otimes \pi^*\big( L^d\big)
\end{displaymath}
\end{definition}

Note that $\pi_*\cO(d) = \Sym^d( V)$, which contains a copy of
$\otimes_{k=0}^n  V_k^{\otimes i_k}$ for every monomial $t_Ix^I$ in
$F_t$. This implies that $H^0\big(\cO(d) \otimes
\pi^*\big( L^d\big)\big) \ni F_t$ contains a copy of 
$H^0(S,  L^{d-\sum a_ki_k }) = H^0(S,  L^{\delta_I}) \ni t_I$.

\begin{lemma}\label{lem:basic-family-canclass}
  $\cX$ is flat and Gorenstein over $S$, and
  $\omega_{\cX/S} \simeq \pi^*( L)$.
\end{lemma}
\begin{proof}
  Indeed, by Lemma~\ref{lem:P-Gorenstein},
  $\amb( V)$ is Gorenstein over $S$ and $\cX$ is a relative Cartier
  divisor. So $\cX$ is Gorenstein over $S$. One has
  \begin{displaymath}
    \omega_{\amb( V)/S} =
    \cO\Big(-\sum_{k=0}^n a_k - 1\Big) \otimes \pi^*(\det  V) = 
    \cO(-d) \otimes \pi^*\big( L^{-d+1}\big) 
  \end{displaymath}
  By adjunction,
  \begin{displaymath}
    \omega_{\cX/{S}} = \omega_{\amb( V)/S}
    \otimes
    \cO(d) \otimes \pi^*\big( L^d\big)
    |_{\cX}=
    \pi^*( L)
  \end{displaymath}
\end{proof}

\begin{lemma}
  Assuming $n>0$, one has $L = \pi_*(\omega_{\cX/S})$
\end{lemma}
\begin{proof}
  By Lemma~\ref{lem:cohs-cX}, 
  $\pi_*(\cO_{\cX/S}) = \cO_S$. The projection formula gives
  $\pi_*(\omega_{\cX/S}) = \pi_*(\pi^* L) =  L \otimes
  \pi_*(\cO_{\cX/S})~=~L$.
\end{proof}

As it is customary in similar situations, one may call $L$ \emph{the
  Hodge bundle}.

\begin{lemma}\label{lem:cO_X(m)-are-flat}
  For any $m\in\bZ$ the sheaf $\cO_\cX(m)$ is flat over $S$.
\end{lemma}
\begin{proof}
  We can work locally over $\Spec A\subset S$ and assume
  $L=\cO_{\Spec A}$. The ring $A[x_0,x_1,\dotsc, x_{n+1}]/F_t$ is a
  free rank-$2$ $A[x_1,\dotsc, x_{n+1}]$-module with a basis
  $\{1,x_0\}$, so it is a priori a free $A$-module. The sheaf
  $\cO_\cX(m)$ corresponds to the module obtained from this ring by
  localizing in homogeneous positive-degree polynomials and taking the
  degree-$m$ direct summand. So it is flat over $A$.
\end{proof}

\begin{lemma}\label{lem:cohs-cX}
  Assuming $n>0$, one has $R^i\pi_*\cO_\cX(m)=0$ for $i>0$ and
  $m\ge 0$, and there is an equality of graded $\cO_S$-algebras
  $\oplus_{m\ge0} \pi_* \cO_\cX(m) = \Sym^*( V) / F_t\cdot
  \Sym^*( V)$.
\end{lemma}
\begin{proof}
  Immediate from the exact sequence
  \begin{displaymath}
    0\to \cO_{\amb( V)}(m-d)\to \cO_{\amb( V)}(m) \to \cO_\cX(m) \to 0
  \end{displaymath}
  and the vanishing in Lemma~\ref{lem:wps-cohs}.
\end{proof}

\begin{definition}
  In particular, one has
  $H^0\big(\cX, \cO(1)\big) = H^0(S,  V_{n+1}) = H^0(S, \cO_S)$.  We
  define a relative divisor $\cD$ on $\cX$ as the zero divisor of the
  section $x_{n+1}$, corresponding to $1_S$. Explicitly, this is the
  closure of the Cartier divisor on $\cX$ intersected with the smooth
  locus of $\bP/S$.  The multiple $d\cD$ is the zero divisor of a
  section of the line bundle $\cO_\cX(d)$, and it is a relative
  Cartier divisor. Thus, $\cD$ is a relative $\bQ$-Cartier divisor.
\end{definition}

\begin{lemma}
  Under the identification of the infinite divisor $\bD_A\subset\amb_A$
  with $\bP_A^n$ of Lemma~\ref{lem:bD-in-bP}, the infinite divisor
  $\cD_A\subset\cX_A$ is isomorphic to a hyperplane $\bP_A^{n-1}$.
\end{lemma}
\begin{proof}
  Indeed, under the identification of Lemma~\ref{lem:bD-in-bP},
  $\sum_{k=0}^n x_k^{s_k} = \sum_{k=0}^n u_k$.
\end{proof}
\medskip

\begin{definition}\label{def:universal-hypersurface}
  We define the universal hypersurface $\pi_n\colon\cX_n\to\cym_n$ in
  the weighted projective $\amb$-bundle $\pi\colon\amb(\cV)\to\cym_n$ over the
  weighted projective stack ${\cym_n}=\cP(\delta_I)$ by the polynomial
  $F_t$ of Equation \eqref{eq:F_t}.  Here, the $t$-variables are
  global sections $t_I\in H^0({\cym_n}, \cO(\delta_I))$. The $x$-variables
  and $F_t$ are global sections on $\amb(\cV)$ of
\begin{displaymath}
  x_k:\, \cO(a_k) \otimes \pi^*\cO_{\cym_n}(a_k)
  \text{ for } 0\le k\le n, \quad
  x_{n+1}:\, \cO(1),
    \qquad
    F_t:\, \cO(d) \otimes \pi^*\cO_{\cym_n}(d).
\end{displaymath}
   We define a relative divisor $\cD_n$ on $\cX_n$ as the
  zero divisor of the section
  $x_{n+1} \in H^0\big(\amb(\cV), \cO(1)\big)$. It is a relative
  $\bQ$-Cartier divisor.
\end{definition}

The following is clear:

\begin{lemma}\label{lem:map-to-stack}
  Let $\pi\colon \cX\to S$ be a family of hypersurfaces from
  Definition~\ref{def:family-hypersurfaces}. If the sections $t_I$ do
  not simultaneously vanish at any point $s\in S$ then $\pi$ is a
  pullback of $\pi_n\colon\cX_n\to \cym_n$ for a unique morphism
  $g\colon S\to\cym_n$, and one has $L=g^*(\cO_{\cym_n}(1))$.
\end{lemma}

\subsection{The embedding $\cym_{n-1}\to \cym_n$}
\label{sec:En-1-En} 

The stack $\cym_{n-1}$ is embedded into $\cym_n$ by
\begin{equation}\label{eq:cym-n-1-to-cym-n}
  g_{t'} = \sum_{k=0}^{n-1}x_k^{s_k}
  + \sum_{I'} t'_{I'}x^{I'} \cdot x_n^{\delta_{I'}}
  \ \longmapsto\ 
  f_{t} = \sum_{k=0}^{n-1} x_k^{s_k}
  + \sum_{I'} t'_{I'}x^{I'} \ell^{\delta_{I'}} \cdot x_{n+1}^{\delta_{I'}}
  + \ell^{s_n},
\end{equation}
where $I'=(i_0,\dotsc, i_{n-1})$,
$\delta_{I'} = d_{n-1}\big(1 - \sum_{k=0}^{n-1} \frac{i_k}{s_k}\big)$,
and $\ell=x_n - \frac1{s_n} t'_{0,\dotsc,0} \cdot
x_{n+1}^{d_{n-1}}$. This map can be split into several steps:
\begin{enumerate}
\item Take a cone over $g_{t'}$, i.e.\ consider $g_{t'}$,
  which is a quasi-homogeneous
  equation of degree $d_{n-1}=s_n-1$
  in weighted \emph{projective} coordinates $x_0,\dotsc, x_n$,
  as an equation in \emph{affine} coordinates $x_0,\dotsc, x_n$.
\item Make a small deformation by adding the term
  $x_n^{s_n}$ of degree one higher.
\item On the affine chart $x_{n+1}=1$, make a linear change of coordinates
  $x_n\to \ell = x_n-\frac1{s_n}t'_{0,\dotsc,0}$ to eliminate the
  $x_n^{s_n-1}$ term (``complete the $s_n$-th power'').
\item And finally make it into a homogeneous equation of degree $d=d_n$ by
  adding the variable $x_{n+1}$ of degree $1$.
\end{enumerate}

\begin{remark}\label{rem:constant-term}
  In Equation~\eqref{eq:cym-n-1-to-cym-n} the coefficient of
  $x_{n+1}^{d_n}$ in $f_t$, i.e.\ the constant term of $f_t$ on the
  affine chart $\{x_{n+1}=1\}$, comes from
\begin{displaymath}\textstyle
  \ell^{s_n-1}(\ell + t'_{0,\dotsc,0} x_{n+1}^{d_{n-1}}) =
  (x_n - \frac1{s_n} t'_{0,\dotsc,0} x_{n+1}^{d_{n-1}})^{s_n-1}
  (x_n + \frac{s_n-1}{s_n} t'_{0,\dotsc,0} x_{n+1}^{d_{n-1}}) 
\end{displaymath}
and equals $(s_n-1) \big( t'_{0,\dotsc,0} / s_n \big)^{s_n} =
d_{n-1} \big( t'_{0,\dotsc,0} / s_n \big)^{s_n}$. 
\end{remark}

\begin{definition}
  We will call the image of $\cym_{n-1} \to \cym_n$ \emph{the
    boundary} $\pd\cym_n$ of $\cym_n$.
\end{definition}

\begin{definition}\label{def:level}
  For a variety $[X]\in\cym_n$, the \emph{level} of $X$ is the maximal
  integer $c$ such that $[X]\in\im\cym_{n-c}$.
\end{definition}

In the examples below, to save space, we write $t_{i_1, \dotsc, i_n}$
instead of $t_{i_0,i_1, \dotsc, i_n}$ for the coordinates, since the condition
$0\le i_0 \le s_0-2 = 0$ forces $i_0=0$ everywhere.

\begin{example}\label{ex:dim0}
  $\cym_0=\bP(2)$ with the coordinate $t$ and
  \begin{displaymath}
    \cX^{(0)} = \{x_0^2 + t \cdot x_1^2 = 0 \} \subset \cym_0 \times
    \bP(1,1) = \cym_0\times\bP^1. 
  \end{displaymath}
  As a set, $\cym_0$ is a single point and
  as a stack, $\cym_0=B\mu_2$, reflecting the fact that the automorphism
  group of $[X]\in\cym_0$ (a union of two points) is~$\mu_2$. 
\end{example}

\begin{example}\label{ex:dim1}
  $\cym_1=\bP(4,6)$ with the coordinates $t_{1}$, $t_{0}$ and
  \begin{displaymath}
    \cX^{(1)} = \{ x_0^2 + x_1^3 + t_{1} x_1 \cdot x_2^4 + t_{0}
    \cdot x_2^6  = 0\}
    \to \cym_1
  \end{displaymath}
  is a family of hypersurfaces in $\bP(3,2,1)$.
  $\cym_0$ is embedded into $\cym_1$ by
  \begin{displaymath}
    x_0^2 + t\cdot x_1^2 \longmapsto x_0^2 + t \ell^2 \cdot x_2^2 + \ell^3,
    \quad\text{where } \ell = x_1 - \tfrac13 t \cdot x_2^2.
  \end{displaymath}
  % This substitution ensures that the polynomial on the right does not
  % have a $x_1^2$-term.
  Explicitly, this means that
  $t_{1} = -\tfrac13 t^2$ and $t_{0} = \tfrac2{3^3}t^3$.
  
  Of course, $\cym_1$ is $\oM_{1,1}$, the moduli stack of stable
  $1$-pointed genus-$1$ curves (also called the $j$-line),
  $j(t_{1},t_{0}) = 12^3 \frac{ 4t_{1}^3 } { 4t_{1}^3 + 27t_{0}^2 }$,
  and $\im\cym_0\subset \cym_1$ is the point where $j=\infty$.  Note
  that the singularity of the singular curve in $\cym_1$ (a node) is a
  cone over $X_0\in\cym_0$.  The divisor $D=(x_2=0)\subset X$ is the
  point at infinity, and $\Aut(X,D)=\mu_2$ for a general $X$,
  explaining why the weights $4,6$ in $\cym_1$ are even. At the points
  $(t_0,t_1)=(0,1)$ and $(1,0)$ the automorphism groups of elliptic
  curves are $\mu_4$ and $\mu_6$ respectively; their $j$-invariants
  are $1728$ and~$0$.
\end{example}

 \begin{example}
   $\cym_2 = \bP(4, 10, 12, 16, 18, 22, 24, 28, 30, 36, 42)$ with $11$
   coordinates $t_{i_1i_2}$ of weight
   $\delta(i_1,i_2) = 42- \frac{42}{3}i_1 - \frac{42}{7}i_2 >0$ for
   $0\le i_1\le 1$ and $0\le i_2\le 5$.
   Note: condition $\delta(i_1,i_2)>0$ excludes the variable $t_{15}$
   of weight $-2$.  The family of hypersurfaces is
   \begin{displaymath}
      \Big\{ x_0^2 + x_1^3 + x_2^7 + \sum_{i_1,i_2} t_{i_1i_2} x_1^{i_1}x_2^{i_2}
      \cdot x_3^{\delta(i_1,i_2)} =0 \Big\} 
      \to \cym_2 
    \end{displaymath}
    % The last variable $\fade{x_3}$ is there for homogeneity, it can
    % be set to $1$ to obtain the open affine part $\bA^2$ of
    % $\bP(21,14,6,\hide{1})$.
    It is a family of hypersurfaces in
    $\bP(\tfrac{42}2,\tfrac{42}3,\tfrac{42}7, 1) = \bP(21,14,6,1)$.
    
    The previous moduli space $\cym_1$ is embedded into $\cym_2$ by 
   \begin{displaymath}
     x_0^2 + x_1^3 + t_{1} x_1x_2^4 + t_{0}x_2^6 \longmapsto 
     x_0^2 + x_1^3 + t_{1} x_1 \ell^4 \cdot x_3^4 +
     t_{0} \ell^6 \cdot x_3^6 + \ell^7,
   \end{displaymath}
   where $\ell = x_2 - \frac17 t_{0} \cdot x_3^6$.
   % This substitution ensures that the polynomial on the right does not
   % have a $x_2^6$-term.
   Explicitly:
   \begin{eqnarray*}
     &(t_{10}, t_{11}, t_{12}, t_{13}, t_{14}) = 
     (\frac1{7^4} t_{0}^4t_{1}, -\frac4{7^3}t_{0}^3t_{1},
     \frac6{7^2}t_{0}^2t_{1}, -\frac47 t_{0}t_{1}, t_{1}) \\
     &(t_{00}, t_{01}, t_{02}, t_{03}, t_{04}, t_{05}) = 
     (\frac6{7^7} t_{0}^7, -\frac5{7^5}t_{0}^6,
     \frac{12}{7^4}t_{0}^5,
     -\frac{15}{7^3}t_{0}^4,  \frac{10}{7^2}t_{0}^3, -\frac37 t_{0}^2)
   \end{eqnarray*}
   The weights of the coordinates $t_{i_1,i_2}$ in $\cym_2$ are all even.
   This reflects the fact that the automorphism group of a generic
   fiber $(X,D)$ is $\mu_2$, defined by the formula
   $(x_0,x_1,x_2,x_3)\mapsto (x_0,x_1,x_2,-x_3)$,
   which on the affine patch $\bA^{3} = \{x_{3}=1\}$ acts by
   $(x_0,x_1,x_2) \to (-x_0,x_1,x_2)$.

   Each surface $[X]\in\cym_2$ has fixed $A_1A_2A_6$-singularities at
   infinity.  The surfaces $[X]\in\cym_2\setminus \partial\cym_2$ are
   K3 surfaces with isolated ADE singularities in the finite part
   $X\setminus D$, none for a general $X$.  The most singular surfaces
   $[X]\in\partial\cym_2$ have a single log canonical singularity in
   $X\setminus D$ which is resolved by inserting an irreducible
   divisor $E$ that is either a smooth elliptic curve or a rational
   curve with a node. In this way, varieties from $\cym_1$
   reappear in the description of varieties in $\partial \cym_2$.
 \end{example}

 \begin{example}
   For $n=3$, $\mu=1\cdot 2 \cdot 6\cdot 42 = 504$. There are $252$
   monomials $t_Ix^I$ with $\delta_I<0$ and $252$ with $\delta_I>0$. So
   $\cymm_3$ is a $251$-dimensional weighted projective space
   $\bP(4,10,12,\dotsc,1764,1806)$.
 \end{example}

\subsection{First properties of the fibers} 
\label{sec:first-properties}

In this section, we work over an algebraically closed field $k$, and
after the first lemma we assume that $\chr k \not|\, \prod_{k=0}^n s_k$.

\begin{lemma}[cf. \cite{esser2023varieties-of-general}, Lemma 3.10] 
  $\amb=\bP(a_0,\dotsc, a_n,1)$ is a Fano variety with canonical
  $\mu_{a_k}$-quotient singularities.
\end{lemma}
\begin{proof}
  $\amb$ is a toric variety, let $\Delta$ be its toric boundary. The
  pair $(\amb,\Delta)$ is log canonical by
  \cite[Lemma~3.1]{alexeev1996log-canonical-singularities} and admits
  a toric resolution of singularities; this is 
  true in any characteristic. Since $\Delta \in |-K_{\amb}|=|\cO(d)|$
  is an ample Cartier divisor by Lemma~\ref{lem:P-Gorenstein}, $\amb$ has canonical singularities and
  is a Fano variety. $\amb$ is covered by the affine charts
  $\bA^{n+1}/\mu_{a_k}$ and $\bA^{n+1}$. Thus, it has
  $\mu_{a_k}$-quotient singularities.
\end{proof}

We have already proved part of Theorem~\ref{mainthm:family} above: we
constructed the universal family $\pi_n$, proved part (1), found
$\omega_{\cX_n/\cym_n}$ and described the embedding $\iota_{n-1}$. We
now establish parts (2), (3) and (4) of Theorem~\ref{mainthm:family},
working over $\bZ\big[1/\prod_{k=0}^n s_k\big]$.

\begin{theorem}\label{thm:mainthm1-234} 
  Let $(X,D)$ be a geometric fiber of the family $\pi_n$. Then:
  \begin{enumerate}
    \setcounter{enumi}{1}
  \item $X\setminus D$ has only finitely many (none for a general $X$) isolated
    singularities.
  \item Along the infinite divisor $D$, $X$ has some
    $\mu_{\gcd(a_k,a_{k'})}$-quotient singularities coming from the
    singularities of $\amb$, the same for every fiber.
  \item For $n\ne0$, $X$ is irreducible, and for $n\ne 1$, $X$ is normal.
  \end{enumerate}
\end{theorem}
\begin{proof}
  For any
  $t=(t_I)$ with $t_I\in k$, in the one-parameter family over
  $\bA^1_\lambda$ given by the equation
\begin{displaymath}
  F(\lambda.t) =\sum_{k=0}^n x_k^{s_k} + \sum_I \big(\lambda^{\delta_I} t_I\big) x^I \cdot x_{n+1}^{\delta_I}
\end{displaymath}
the central fiber over $\lambda=0$ is isomorphic to $X_0$ and all the
fibers over $\bA^1\setminus 0$ are isomorphic to $X_t$. Thus, any
property of $X_0$ that is open in the Zariski topology also holds for
$X_t$.
Let $C(X_0), C(X_t)\subset \bA^{n+2}$ be the affine cones over
$X_0,X_t$. Let $p\in X_0$ be the point $(0,\dotsc, 0, 1)$.  Since
$\chr k$ does not divide $\prod_{k=0}^n s_k$, $C(X_0\setminus p)$ is
smooth outside of the line $k.p$. Then $C(X_t)$ is smooth outside of
finitely many lines $k.p_1,\dotsc, k.p_r$.  $X_t$ is also well-formed
(\cite[Def.~6.9]{fletcher2000working-with-weighted}).  Then the
singularities of $X_t\setminus \{p_1,\dotsc, p_r\}$ all come from the
singularities of the ambient space
(cf. \cite[Prop.~8]{dimca1986singularities}). By the previous lemma,
they are some $\mu_{a_k}$-quotient singularities. But since $X_t$ does
not contain the coordinate points, they are in fact at worst
$\mu_{\gcd(a_k, a_{k'})}$-quotient singularities for some $k\ne k'$.

For $n\ne0$, $X_0$ is irreducible, and for $n\ne 1$, it is normal. Both
of these are open properties, so they hold for $X_t$ as well.
\end{proof}

Let us also prove the easy part (1) of
Theorem~\ref{mainthm:singularities}.

\begin{lemma}\label{lem:canonical-near-bdry}
  Near $D$, the pair $(X,D)$ has log canonical singularities, and the
  variety $X$ has canonical singularities.
\end{lemma}
\begin{proof}
  Let $\Delta$ be the toric boundary of $\amb$ and
  $\Delta_X =\Delta\cap X$. Let $\wt\amb\to\amb$ be a toric resolution
  of singularities.  By the same argument as in the proof of the
  previous theorem, it provides a resolution of singularities of the
  pair $(X,\Delta_X)$, and the discrepancies of $(X,\Delta_X)$ equal
  those of $(\amb,\Delta)$. So there exists an open neighborhood $U$
  of $\Delta_X$ such that $(X,\Delta_X)$ is log canonical on $U$ and
  $U\setminus\Delta_X$ is smooth.  For every exceptional divisor $E$
  on a resolution of $X$ for the discrepancies one has
  $a(X,E) \ge a(X,\Delta_X,E) +
  \operatorname{ord}_E(\Delta_X)$. Since $\Delta_X$ is Cartier,
  for every $E$ with the image in $\Delta_X$ we get $a(X,E)\ge
  -1+1\ge0$. So $X$ is canonical on $U$.
\end{proof}

\presectionskip
\section{Singularities of nondegenerate hypersurfaces in $\cym_n$}
\label{sec:sings-nondegenerate}

\subsection{The standard theory}
\label{sec:newton-general}

Let $f(x_0, \dotsc, x_n)= \sum_I c_I x^I$ with
$I = (i_0, \dotsc, i_n) \in\bZ^{n+1}_{\ge0}$ be a polynomial defining
an isolated singularity $X=(f=0)\subset\bC^{n+1}$ at the origin.  Its
Newton polyhedron is 
\begin{displaymath}
  \newton(f) = \Conv\big(I + \bR^{n+1}_{\ge0} \mid c_I\ne 0\big),
  \qquad 
\end{displaymath}
Denote by $\newton^\circ(f)$ its interior.  One says that $f$ is
\emph{(Newton) nondegenerate} if for any compact face $\gamma$ of
$\newton(f)$ the leading term $f_\gamma$ defines a nonsingular
hypersurface in $(\bC^*)^{n+1}$.  For a linear function
$w(I)=w_0 i_0+\dotsc +w_n i_n$ on $\bR^{n+1}$ with
$w_k>0$, let
\begin{displaymath}\textstyle
  w(f) = \min\, \{w(I) \mid I\in\newton(f) \}.
\end{displaymath}
In particular, one has
$w(\prod_{k=0}^n x_k) = w(\bone) = \sum_{k=0}^n w_k$,
where $\bone=(1,1,\dotsc,1)$.  Define
\begin{displaymath}
  b_w = 1 + w(f) - \sum_{k=0}^n w_k
  \quad\text{and}\quad
  r_w = \frac{w(f)}{\sum_{k=0}^n w_k}.
\end{displaymath}
Clearly, $b_w\le 1$ (resp.\ $b_w<1$) iff $r_w\le 1$
(resp.\ $r_w<1$). But unlike $b_w$, $r_w$ is
homogeneous: $r_{\lambda w} = r_w$ for 
$\lambda\in\bR_{>0}$. Equally obvious is that
\begin{displaymath}
  \max_w r_w \le1, \text{ resp.\ } \max_w r_w< 1
  \iff
  \bone \in\newton(f), \text{ resp.\ } \bone\in\newton^\circ(f).
\end{displaymath}

The following statement is well known and goes back to
\cite[I.12.1]{arnold1985singularities-of-differentiable}, see e.g.\ 
\cite[\S8]{kollar1997singularities-of-pairs}, 
\cite{howald2001multiplier-ideals, howald2003multiplier-ideals}, 
\cite[Prop.~2.9]{ishii2001hypersurface-exceptional}.
%\cite{mustata2002singularities-of-pairs}. 
We will need a similar computation in the proof of
Lemma~\ref{lem:blowup-discrepancy}, so for clarity we provide a (very
short) proof.

\begin{theorem}\label{thm:newton-lc}
  If $\bone\notin \Gamma_+(f)$ then $(f=0)$ is not log canonical at the origin. Now 
  suppose that $f$ is nondegenerate and $X=(f=0)$ has an isolated
  singularity at the origin. Then $X$ is log canonical
  (resp.\ canonical) at the origin $\iff$ $\max_w r_w \le1$
  (resp.\ $<1$) $\iff$ $\bone \in\newton(f)$
  (resp.\ $\bone\in\newton^\circ(f)$).
\end{theorem}

\begin{proof}
  Denote $Y=\bC^{n+1}$ and let $\phi\colon \wY\to Y$ be a proper
  birational toric morphism dominating the projective morphism
  $Y'\to Y$ corresponding to $\Gamma_+(f)$ and such that $\wY$ is
  smooth. Denote by $\wX$ the proper transform of $X$ and by
  $\varphi\colon\wX\to X$ the induced morphism.
  The fan $\fF_\wY$ of
  $\wY$ is a refinement of the fan $\fF_Y$ of $Y$ (the first quadrant),
  and the exceptional divisors $E_w$ of $\phi$ are in bijection
  with the new rays $w=(w_0,\dotsc,w_n)\in\bZ_{>0}^{n+1}$ in
  $\fF_\wY$.  Let $D= \sum_{k=0}^n D_k = (\prod_{k=0}x_k)$ be the
  toric boundary of $Y$. Then the toric boundary of $\wY$ is
  $\phi_*\inv(D) + \sum_{w\in V}E_w$. We compute
  \begin{eqnarray*}
    &&\phi^*\big(K_{Y} + D) = 0 = K_\wY + \phi_*\inv (D) +
       \sum_{w\in V} E_w\\
    &&\phi^*(X) = \wX + \sum  w(f) E_w,
       \qquad
       \phi^*(D) = \phi_*\inv(D) + \sum w(\bone) E_w.
  \end{eqnarray*}
  This combines to 
  \begin{equation}\label{eq:log-discrepancy}
    \phi^*(K_{Y} + X ) =
    K_{\wY} + \wX + \sum_{w\in V}b_w E_w,
      \quad
    \varphi^*(K_X) = K_\wX + \sum b_w E_w|_\wX.
  \end{equation}
  Assume that $f$ is nondegenerate. Then $\varphi$ is a resolution of
  singularities of $X$ and the divisor $\sum_w E_w|_{\wX}$ is a
  reduced normal crossing divisor on $\wX$. If $\bone \in\newton(f)$
  (resp.\ $\bone\in\newton^\circ(f)$) then $b_w\le 1$ (resp.\ $b_w<1$) and
  the last formula says that $X$ is log canonical (resp.\ klt). And
  since $X$ is Gorenstein, klt is equivalent to canonical.

  On the other hand, if some $b_w>1$ then already on the weighted
  blowup $\phi_w\colon Y_w\to Y$ corresponding to the star subdivision of
  $\fF_Y$ obtained by adding the ray $w$, the divisor $E_w|_{\hX_w}$
  on the normalization $\hX_w$ of the strict transform $X_w =
  \phi_w\inv(X)$ produces a divisor of log discrepancy $<-1$, even
  without assuming that $f$ is nondegenerate. So $X$ is not log canonical.
 \end{proof}

\begin{corollary}\label{cor:brieskorn-pham-nonlc}
  The Brieskorn-Pham singularity~\eqref{eq:brieskorn-pham} is not log
  canonical. 
\end{corollary}
\begin{proof}
  Indeed, the only compact face of $\newton(f_0)$ is given by the
  inequality $w(I)\ge1$ with
  $w = (\frac1{s_0}, \dotsc, \frac1{s_n})$. So $f_0$ is
  nondegenerate and $r_w = 1/\sum_{k=0}^n \frac1{s_k} >1$ by the
  Egyptian fraction identity~\eqref{eq:egyptian}.
\end{proof}

\subsection{Convexity lemma}
\label{sec:combinatorial-lemma}

Let $e_0,\dotsc, e_n$ be the standard basis of $\bR^{n+1}$.

\begin{lemma}\label{lem:combinatorial-lemma}
  Let $I=(i_0,\dotsc, i_n)\in \bZ_{\ge0}^{n+1}$ be a point with
  $\delta_I>0$, and $\sigma_I$ be a simplex with the vertices $s_ke_k$
  ($0\le k\le n$) and $I$. 
  Then  $\bone\in \sigma_I$, and if $\bone \notin \sigma_I^\circ$ then
  \begin{equation}
    \label{eq:lies-in-face}
    i_n + \sum_{k=0}^{n-1} a_{n-1,k} i_k = d_{n-1}.
  \end{equation}
\end{lemma}
\begin{proof}
  Let $\delta:=\delta_I\in\bZ_{>0}$ be the degree difference of $I$
  as in Equation~\eqref{eq:degree-diff}, and set
  $\displaystyle\alpha=\frac1{\delta}$ and
  $\beta_k = \displaystyle\frac{\delta-i_k}{\delta s_k}$ for
  $0\le k\le n$. One has
  \begin{displaymath}
    \alpha I + \sum_{k=0}^n \beta_k (s_ke_k)
    = \big( \dotsc, \frac{i_k}{\delta} + \frac{\delta-i_k}{\delta},
    \dotsc \big) =     \bone,
    \end{displaymath}
    \begin{displaymath}
    \alpha+\sum_{k=0}^n\beta_k
     =\frac1{\delta}+\sum_{k=0}^n\frac{\delta-i_k}{\delta s_k}
    =\sum_{k=0}^n\frac1{s_k}+\frac1d=1,
  \end{displaymath}
   using $\delta=d(1-\sum i_k/s_k)$ and the Egyptian
   identity~\eqref{eq:egyptian}.  So $\alpha$ and $\beta_k$ are the
   barycentric coordinates of the point $\bone$ in the coordinates
   $\{I, s_ke_k\}$. The condition $\bone\in\sigma_I$ is equivalent to
   $\alpha\ge0$ and $\beta_k\ge0$ for $0\le k\le n$. The first
   inequality is clear, and the second one means that $\delta\ge i_k$
   for all~$k$. Indeed, taking Equation~\eqref{eq:degree-diff} modulo
   $s_k$ and using the Congruence relation~\eqref{eq:congruence} gives
  \begin{displaymath}
    \delta \mymod{s_k} = -a_ki_k \mymod{s_k} = i_k \mymod{s_k}. 
  \end{displaymath}
  Since $i_k<s_k$, this implies that $\delta\ge i_k$ for all $k$ and
  proves that $\bone\in\sigma_I$. If $\bone$ lies on the boundary of
  $\sigma_I$ then one must have $\beta_k=0$ for some $k$,
  i.e.\ $\delta=i_k$. Then
  \begin{displaymath}
    i_k \mymod{s_n} = \delta\mymod{s_n} = i_n \mymod{s_n}.
  \end{displaymath}
  Since both $i_k$ and $i_n$ lie in the interval $[0, s_n)$, this
  means that $i_k=i_n$. So $\delta=i_n$. Recalling the
  definition~\eqref{eq:degree-diff} of $\delta$, this translates to
  \begin{displaymath}
    d - \sum_{k=0}^n a_k i_k = i_n \iff
    s_n i_n  + \sum_{k=0}^{n-1} a_k i_k = d 
  \end{displaymath}
  because $a_n+1 = d_{n-1}+1 = s_n$. Dividing this identity by
  $s_n$, we obtain~\eqref{eq:lies-in-face}.
\end{proof}

\subsection{Singularities of nondegenerate hypersurfaces in $\cym_n$}
\label{sec:consequences}

In the next lemma we work with a singularity
$f_t= \sum_{k=0}^n x_k^{s_k} + \sum_{I\in\cI_n^+} t_I x^I$ of the general form
\eqref{eq:F_t} in which the multi-indices $I$ go over the extended index set
\begin{equation}
  \label{eq:extended-index-set}
    \cI_n^+ = \{ (i_0, \dotsc, i_n) \mid 0\le i_k\le s_k-1,\ \delta_I > 0 \}.
\end{equation}

\begin{lemma}\label{lem:comes-from-previous}
  Assume that $(f_t=0)$ has an isolated nondegenerate singularity at
  the origin and that $f_t\ne f_0$, i.e.\ there is at least one
  monomial $t_Ix^I$ with $\delta_I>0$. Then this singularity is log
  canonical. Moreover, it is canonical unless 
  \begin{math}
    f_t = g_{t'} + x_n^{s_n},
  \end{math}
  where $g_{t'}(x_0,\dotsc, x_n)$ is a weighted homogeneous
  polynomial of degree $d_{n-1}=s_n-1$ if we set $\deg x_k = a_{n-1,k}$ for
  $0\le k\le n-1$ and $\deg x_n=1$, in which case it is strictly log canonical.
\end{lemma}
\begin{proof}
  The first half follows at once by Theorem~\ref{thm:newton-lc} and
  Lemma~\ref{lem:combinatorial-lemma} since $\sigma_I$ is a subset of
  $\Gamma_+(f_t)$.  For the ``moreover'' part, recall from
  Theorem~\ref{thm:newton-lc} that the singularity is canonical iff
  $\bone\in\Gamma_+(f)^\circ$. If there exists a monomial $t_Ix^I$
  with $\delta_I>0$ such that $\bone\in\sigma_I^\circ$, then
  $\bone\in\Gamma_+(f)^\circ$ since $\sigma_I^\circ$ is open and
  $\sigma_I\subset\Gamma_+(f)$, hence the singularity is canonical.
  Otherwise $\bone\notin\sigma_I^\circ$ for every monomial with
  $\delta_I>0$, and Lemma~\ref{lem:combinatorial-lemma} (see~\eqref{eq:lies-in-face}) gives
  $i_n+\sum_{k=0}^{n-1}a_{n-1,k}i_k=d_{n-1}$ for every such
  $I$. Equivalently, $f_t-x_n^{s_n}$ is weighted homogeneous of degree
  $d_{n-1}$ for weights $\deg x_k=a_{n-1,k}$ ($k\le n-1$),
  $\deg x_n=1$. In this case
  $\bone\in\Gamma_+(f)\setminus\Gamma_+(f)^\circ$, so the singularity
  is strictly log canonical.
\end{proof}

\begin{corollary}\label{cor:nondegenerate-lc-can}
  Let $[(X,D)]\in\cym_n$ and $p \in X\setminus D$ be a nondegenerate
  singularity. Then $X$ is log canonical at $p$. Moreover, it is
  canonical unless $[X]\in \iota_{n-1}(\cym_{n-1})$ as in
  Section~\ref{sec:En-1-En}, in which case it is strictly log canonical.
\end{corollary}
\begin{proof}
  Say, $p=(c_0,\dotsc, c_n)\in X\setminus D$ is a singular point in
  the affine chart $\bC^{n+1}=\{x_{n+1}=1\}$. After making a
  substitution $x_k \to x_k + c_k$ the singularity $p$ is moved to the
  origin, and the polynomial $f_t$ takes the form
  $\sum_{k=0}^n x_k^{s_k} + \sum t_I x^I$ in which the multi-indices
  go over the extended index set $\cI_n^+$ in \eqref{eq:extended-index-set}.
  By Lemma~\ref{lem:comes-from-previous}, $X$ is log
  canonical at $p$. Moreover, if it is not canonical then it is obtained from
  a homogeneous degree $d_{n-1}$ polynomial $g_{t'}$ by adding the
  $x_n^{s_n}$ term. Putting it back into the normal form by
  completing the $s_k$-th powers is done uniquely, and the result
  matches $\iota_{n-1}(g_{t'})$ defined in Section~\ref{sec:En-1-En}.
\end{proof}

\begin{remark}
  Since the proofs in this section depend only of $\Gamma_+(f)$, the
  conclusions of Corollary~\ref{cor:nondegenerate-lc-can} remain true
  if $f$, in addition to at least one nonzero monomial $t_Ix^I$ of
  degree $<d$, also contains monomials of degree $>d$.
\end{remark}

\presectionskip
\section{Varieties fibered in Calabi-Yau hypersurfaces}
\label{sec:fibered-varieties}

In this section, we work over $\bC$ because we use Inversion of
Adjunction Theorem~\ref{thm:inversion-of-adjunction} in the proofs of
Lemma~\ref{lem:prep} and Theorem~\ref{thm:lct-formula}.  Let $C$ be a
smooth curve and $\pi\colon X\to C$ be a family of hypersurfaces in
$\amb$ defined by a polynomial
\begin{equation}\label{eq:main-equation-again}
  F = \sum_{k=0}^n x_k^{s_k} + \sum_{I\in\cI_n^+} t_Ix^I \cdot x_{n+1}^{\delta_I},
  \quad I = (i_0, \dotsc, i_n).
\end{equation}
The multi-indices $I$ go over the extended index set $\cI_n^+$ defined
in \eqref{eq:extended-index-set}. 
As in Definition~\ref{def:family-hypersurfaces}, here
$t_I\in H^0(C,  L^{\delta_I})$ are sections for
a certain ``fundamental'' line bundle $L$ on $C$.
As always, we denote by $D$ the ``infinite'' divisor $(x_{n+1}=0)$.
We assume that at
least one of the coefficients $t_I$ does not vanish identically on
$C$. But, crucially, we allow all $t_I$ to vanish simultaneously at
some points $p\in C$. If that happens, the fiber $X_p = \pi\inv(p)$ is
``cuspidal'': it is isomorphic to the weighted cone
\begin{equation}
  \label{eq:weighted-cone}
  \Big(\sum_{k=0}^n x_k^{s_k} = 0\Big) \subset \amb.
\end{equation}

If $n=1$ then $X\to C$ is an elliptic surface with a section
$D$. We now generalize several classical results in the theory of
elliptic surfaces to the higher-dimensional case, such as:
\begin{enumerate}
\item The minimal form and a criterion for
  $X$ to have $ADE$ (i.e.\ canonical) singularities \cite[Section
  III.3]{miranda1989the-basic-theory}.
\item The log canonical threshold of the pair $(X,X_p)$ and the 
  canonical class formula \cite{ueno1973classification-of-algebraic,
    fujita1986zariski-decomposition, fujino2000canonical-bundle}.
\item Tate's algorithm for determining the Kodaira type of a fiber,
  with Dokchitser's improvements \cite{dokchitser2013remark-on-tate}. 
\end{enumerate}

\subsection{The minimal form}
\label{sec:minimal-form}

Let $(C,p)$ be a smooth pointed curve, and let $\tau$ be a local
parameter at $p$.

\begin{definition}\label{def:min-reduced-valuation}
  For each $I$, let $\val_p t_I = \val_\tau t_I$ be the multiplicity of
  $\tau$ in $t_I$. If $t_I\equiv 0$, we set $\val_p t_I=+\infty$. We
  define the \emph{reduced valuation} to be
  \begin{displaymath}
    \ovval_p\, t_I = \frac{\val_p t_I}{\delta_I}
    \quad\text{with } \delta_I \text{ defined as in } \eqref{eq:degree-diff},
  \end{displaymath}
  and $\nuu_p:= \min_I \ovval_p\, t_I$. We say that a family
  \eqref{eq:main-equation-again} is \emph{in minimal form} if $\nuu_p<1$.
\end{definition}

The number $\nuu_p$ measures the largest common factor $\tau^{\nuu_p\delta_I}$
``hidden'' in $t_I$.

\begin{lemma}\label{lem:diagram-Ys}
  For any positive integer $m$, there exists a commutative diagram
  \begin{center}
  \begin{tikzcd}
    & \wY 
    \arrow[ld, "\phi"'] \arrow[rd, "\phi^{(m)}"]
    \\
    Y && Y^{(m)}
  \end{tikzcd}
\end{center}
in which
\begin{enumerate}
\item $V$, $V^{(m)}$ are graded vector bundles over $C$ defined in
  Section~\ref{sec:bPV} with fundamental line bundles $L$ and $L^{(m)}$
  on $C$, and $Y=\bP(V)$, $Y^{(m)}=\bP(V^{(m)})$ are the corresponding
  locally trivial $\bP$-fibrations over
  $C$. 
\item $\phi$ and $\phi^{(m)}$ are normalized weighted blowups with
  exceptional divisors resp.\ $\bE\simeq Y_p^{(m)}$ and~$\bE^{(m)}$. $\phi$
  contracts $\bE$ to the point $0\in\bA^{n+1}\subset Y_p$, and
  the map $\phi^{(m)}|_{\bE^{(m)}}$ is generically a
  $\bP^1$-fibration over the infinite divisor $\bD_p^{(m)}\subset
  Y^{(m)}_p$.
\item The fiber $\wY_p$ is $\bE\cup \bE^{(m)}$. It is
  reduced and generically has double crossing singularities along
  $\bD_p = \bE\cap\bE^{(m)} \simeq \bP(a_0,\dotsc,a_n)$. The ambient
  variety $\wY$ generically has $A_{m-1}$-singularities along
  $\bD_p$.
\item $\wY$ and $\wY_p$ are Cohen-Macaulay.
\item The divisor $md\bE$ on $\wY$ is Cartier.
\item For the fundamental line bundles, one has $L^{(m)}=L(-mp)$.
\end{enumerate}
\end{lemma}
\begin{proof}
  We work locally in a neighborhood of $p\in C$, so we could assume that
  $Y\simeq \amb\times C$ and $Y^{(m)}\simeq \amb\times C$ if we wished.  We
  build the above diagram using a toroidal construction, which we
  describe by fans in the lattice $N = \bZ^{n+1}\times\bZ$. We begin
  with the fan $\fF_\amb$ of the weighted projective space
  $\amb=\bP(a_0,\dotsc, a_n,1)$. This is a simplicial fan in
  $\bZ^{n+1}$ whose rays are the basis vectors $e_k$ for $0\le k\le n$
  and the vector $v = -\sum_{k=0}^n a_k e_k$. The cones are
  generated by the subsets of cardinality $\le n+1$ of these $n+2$
  rays.  The fan of $Y$ is the Cartesian product
  $\fF_Y = \fF_\amb \times \bR_{\ge0}  e_{n+1}$, with $e_{n+1}$
  corresponding to the local parameter $\tau$ on $C$. The
  $(n+2)$-dimensional cone $\sigma=\sum_{k=0}^{n+1}\ \bR_{\ge0} e_k$
  corresponds to the open subspace $\bA^{n+1}\times C\subset Y$.

  We define $\phi\colon\wY\to Y$ to be the weighted blowup obtained by
  inserting the ray $w = \sum_{k=0}^n ma_k e_k + e_{n+1}$ into the
  interior of $\sigma$ and star-subdividing it.  Then the center of
  $\phi$ is the point $\{x_0=\dotsb= x_n=\tau=0\}$ in
  $\bA^{n+1} \subset Y_p $. The exceptional divisor $\bE$ of $\phi$ is
  $\bP(ma_0,\dotsc, ma_n,1)\simeq \bP(a_0,\dotsc,a_n,1)= \amb$ by
  the standard well-formed reduction~\cite[Prop.\
  1.3]{delorme1975espaces-projectifs}. 
  We define the contraction $\phi^{(m)}$ by forgetting the ray $e_{n+1}$
  in the fan $\fF_{\wY}$. The fan $\fF_{Y^{(m)}}$ is then also a
  Cartesian product $\fF_\amb\times\bR_{\ge0}w$, so $Y^{(m)}$ is a
  locally trivial $\amb$-fibration over $C$. The relation
  $e_{n+1} = mv + w$ shows that $\phi^{(m)}$ is a weighted blowup with
  the center $\{x_{n+1}=\tau=0\} = \bD_p\subset\amb\simeq Y^{(m)}_p$.

  The divisor $\bE^{(m)}$ is a toric variety corresponding to the fan
  $\operatorname{Star}(e_{n+1}) / \bR  e_{n+1}$ with the lattice
  $N / \bZ e_{n+1}$. This lattice is generated by the images
  $\bar e_k$, $\bar v$, $\bar w$, and the integral generators of the
  rays of $\fF_{\bE^{(m)}}$ are $\bar e_k$, $\bar v$ and $-\bar v$
  because $\bar w = -m\bar v$.  The map $\bE^{(m)}\to Y^{(m)}$
  corresponds to a projection $\fF_{\bE^{(m)}} \to \fF_{Y^{(m)}}$ of fans sending
  $\pm \bar v\to 0$. So it is generically a $\bP^1$-fibration whose
  image is the toric variety for the fan in
  $\bZ^{n+1}/(\bZ e_{n+1} + \bZ w)$ generated by $\bar e_k$,
  $0\le k\le n$ with a single relation $\sum a_k \bar e_k=0$. This is
  the fan of the infinite divisor $\bP(a_0,\dots,a_n)=\bD_p\subset\amb$.
  
  The fiber $\wY_p=\bE\cup\bE^{(m)}$ is reduced because both $e_{n+1}$
  and $w$ map to $1\in\bZ$ under the second projection
  $\bZ^{n+1}\times\bZ\to\bZ$. As it is true for any toroidal variety
  with a fan with two rays sharing a two-dimensional cone, the
  intersection $\bE\cap\bE^{(m)}$ is generically a double crossing in
  $\bE\cup\bE^{(m)}$.  It is straightforward to check that
  $\fF_{\bE\cap\bE^{(m)}} = \fF_{\bD_p}$ for
  $\bD_p\subset\amb\simeq \bE$. The vectors $e_{n+1},w$ generate a
  sublattice of index $m$ in $N$, which implies that $\wY$ generically
  has Gorenstein index-$m$ cyclic quotient (i.e.\ $A_{m-1}$) singularities
  along $\bE\cap\bE^{(m)}$.

  Like any toroidal variety, $\wY$ is Cohen-Macaulay, and so is the
  Cartier divisor $\wY_p$ on it.  The divisor $md\bE$ is Cartier because
  the ray generators of every maximal cone span a sublattice with
  cotorsion annihilated by $md$.

  Let $\bD$ and $\bD^{(m)}$ be the families of infinite divisors in
  $Y\to C$ and $Y^{(m)}\to C$ respectively. The $\bQ$-divisor
  $\bD^{(m)}$ corresponds to a PL function $\gamma$ on the fan
  $\fF_{Y^{(m)}}$ which equals $1$ on $v$ and zero on the other
  rays. The pullback $\phi^{(m)}{}^*(\gamma)$ corresponds to the
  pullback PL function on $\fF_{\wY}$. The relation $e_{n+1}=mv+w$
  implies $\gamma(e_{n+1}) = m\gamma(v) + \gamma(w)=m$, which shows
  that
  $\phi^{(m)}{}^*(\bD^{(m)}) = (\phi^{(m)}_*)\inv(\bD^{(m)}) + m\bE^{(m)}$
   and $\phi_*\phi^{(m)}{}^*(\bD^{(m)}) = \bD + mY_p$.
  Since $\bD$ and $\bD^{(m)}$ are sections of $\cO_{\bP(V)}(1)$ and
  $\cO_{\bP(V^{(m)})}(1)$ respectively, this implies that $L = L^{(m)}(mp)$,
  proving (6).
\end{proof}

\begin{remark}
  The two components $\bE$ and $\bE^{(m)}$ of the fiber $\wY_p$ have
  complementary singularities along $\bD_p$. The first one is covered by
  the quotients $\frac{(-1,\ a_k, k\ne j)}{d}$ and the second one by
  $\frac{(1,\ a_k, k\ne j)}{d}$. The latter are canonical, while the
  former are not.
\end{remark}

\begin{theorem}\label{thm:minimal-form}
  Suppose that a family~\eqref{eq:main-equation-again} is not in minimal
  form, and let $m\le \nuu_p$ be a positive integer. Then there
  exists a commutative diagram
  \begin{center}
  \begin{tikzcd}
    & \wX 
    \arrow[ld, "\varphi"'] \arrow[rd, "\varphi^{(m)}"]
    \\
    X && X^{(m)}
  \end{tikzcd}
  \end{center}
  in which
  \begin{enumerate}
  \item $X$ and $X^{(m)}$ are families as in
    \eqref{eq:main-equation-again}, with $t^{(m)}_I =
    t_I/\tau^{m\delta_I}$, $\nuu^{(m)}_p = \nuu_p-m$.
  \item $\varphi$ and $\varphi^{(m)}$ are normalized weighted blowups
    with exceptional divisors resp.\ $E\simeq X_p^{(m)}$
    and~$E^{(m)}$. $\varphi$ contracts $E$ to a point of
    $X_p$, and the map $\varphi^{(m)}|_{E^{(m)}}$ is generically a
    $\bP^1$-fibration over the infinite divisor $D_p^{(m)}\subset~X_p^{(m)}$.
\item The fiber $\wX_p$ is $E\cup E^{(m)}$. It is
  reduced and generically has double crossing singularities along
  $D_p = E\cap E^{(m)}$, the infinite divisor on $E$. The ambient
  variety $\wX$ generically has $A_{m-1}$-singularities along
  $D_p$.
\item $\wX$ and $\wX_p$ are Cohen-Macaulay.
  \end{enumerate}
\end{theorem}
\begin{proof}
  Again, we can work locally, over a small neighborhood of $p$.  Since
  $\val_p t_I \ge \nuu_p \delta_I\ge m\delta_I$ for all $I$, the sections
  $t^{(m)}_I$ are regular. Then $\ovval_p\,t_I^{(m)} = \ovval_p\,t_I -
  m$, so $\nuu^{(m)}_p=\nuu_p-m$.
  Let $X\subset Y$ and $X^{(m)}\subset Y^{(m)}$ be hypersurfaces in the weighted
  projective fibrations of Lemma~\ref{lem:diagram-Ys}.  Let $z$ and
  $z^{(m)}$ be local parameters at generic points of $\bE$ and $\bE^{(m)}$, so
  that $\tau = zz^{(m)}$. In coordinates, the blowups $\phi$ and $\phi^{(m)}$ are
  respectively
  \begin{eqnarray*}
    &x_k = z^{ma_k} u_k \quad\text{for}\quad 0\le k\le n,
       &x_{n+1} = u_{n+1} \\
    &x^{(m)}_k = u_k \quad\text{for}\quad 0\le k\le n,
       &x^{(m)}_{n+1} = z^{(m)}{}^m u_{n+1} 
  \end{eqnarray*}
  Then $\sum_{k=0}^n x_k^{s_k} = z^{md} \sum_{k=0}^n u_k^{s_k}$ and 
  \begin{displaymath}
    t_I x^I x_{n+1}^{\delta_I} =
    (\tau^{m\delta_I} t^{(m)}_I) (z^{\sum ma_k i_k} u^I) u_{n+1}^{\delta_I}
    =
    z^{md} t^{(m)}_I u^I (z^{(m)}{}^{m} u_{n+1})^{\delta_I},
  \end{displaymath}
  This implies that the multiplicity $w(f)$ of $F$ at the origin along
  the divisor $\bE$ is $md$. It also implies the identity
  \begin{displaymath}
    F = \sum_{k=0}^n x_k^{s_k} + \sum_I t_I x^I \cdot x_{n+1}^{\delta_I} =
    z^{md} \Big(
    \sum_{k=0}^n x_k^{(m)}{}^{s_k} + \sum_I t^{(m)}_I x^{(m)}{}^I
    \cdot x_{n+1}^{(m)}{}^{\delta_I}\Big) = z^{md}F^{(m)}.
  \end{displaymath}
  This proves that $\phi^*(X) = \phi^{(m)}{}^*(X^{(m)}) + md\bE$. We
  set $\wX\subset\wY$ to be $\phi^{(m)}{}^*(X^{(m)})$, which coincides
  with the strict transform $\phi_*\inv(X)$.
  Since $\wX$ is a Cartier divisor in $\wY$, it is Cohen-Macaulay and
  so is its fiber over $p$. Parts (2) and (3) follow immediately from
  parts (2) and (3) of Lemma~\ref{lem:diagram-Ys}.
\end{proof}

\begin{corollary}\label{cor:minimal-form}
  For $m = \lfloor \nuu_p \rfloor$, the family $X^{(m)}\to C$ is in
  minimal form over $p$.
\end{corollary}

\begin{lemma}\label{lem:blowup-discrepancy}
  In the previous theorem, one has
  \begin{math}
    K_\wX = \varphi^*(K_X) - m E.
  \end{math}
\end{lemma}
\begin{proof}
  The same computation as in the proof of Theorem~\ref{thm:newton-lc},  
  Equation~\eqref{eq:log-discrepancy} gives
  \begin{displaymath}
    \varphi^*(K_X) = K_\wX + bE,
    \quad\text{where } b=1 + w(f)-\sum_{k=0}^n w(x_k) -w(\tau).
  \end{displaymath}
  As in the proof of the above theorem, $w(f)=md$, so
  \begin{math}
    b = 1 + md-\sum_{k=0}^n ma_k -1,
  \end{math}
  which equals $m$ by the Egyptian identity~\eqref{eq:egyptian}.
\end{proof}

% \begin{lemma}
%   For the fundamental line bundles, one has $L' = L(-mp)$.
% \end{lemma}
% \begin{proof}
%   Denote
%   $g := \pi\circ\varphi = \pi'\circ\varphi'\colon \wX\to
%   C$. Using Lemmas~\ref{lem:blowup-discrepancy} and
%   \ref{lem:basic-family-canclass}, we have $K_{\wX/C} = g^*(L)
%   -mE$. We also have
%   $K_{\wX/C} = \varphi'{}^*(K_{X'}) + m'E' = g^*(L') + m'E'$ for some
%   a-priori rational number $m'$. Then $g^*(L-L') = mE + m'E'$. Since
%   $g^*(p)=E+E'$ as divisors, one must have $m=m'$ and $L-L'= \cO(mp)$.
% \end{proof}

\subsection{Log canonical threshold of a fiber}
\label{sec:lct}

The proofs below assume $n\ge2$ for simplicity. The case $n=1$, which
in any case is classical, is done in the same way by replacing ``lc''
by ``slc'' and the lc threshold by the slc threshold. (The only
difference is that a nodal curve $[X]\in\cym_1$ has an slc singularity.)

\begin{lemma}\label{lem:prep}
  Suppose that Theorem~\ref{mainthm:singularities} holds in
  dimension $n$. Then
  \begin{enumerate}
  \item For any family~\eqref{eq:main-equation-again} with $\nuu_p=0$,
    the $(n+1)$-dimensional pair $(X,X_p+D)$ is log canonical in a
    neighborhood of $X_p$.
  \item In the diagram of Theorem~\ref{thm:minimal-form}, if
    $\nuu_p^{(m)}=0$ then $\wX_p$ is semi log canonical and the pair
    $(\wX,\wX_p)$ is log canonical in a neighborhood of $\wX_p$.
  \end{enumerate}
\end{lemma}
\begin{proof}
  (1) Since $\nuu_p=0$, after putting the fiber $(X,D_p)$ into a normal form it
  gives a point of $\cym_n$, hence it is log canonical by the
  dimension-$n$ hypothesis. Inversion of
  Adjunction~\ref{thm:inversion-of-adjunction} gives log
  canonicity of $(X,X_p+D)$ near $X_p$.

  (2) Since the pair $(E,D_p)$ is isomorphic to a fiber of the family
  $(X^{(m)},D^{(m)})\to C$ with $\nuu_p^{(m)}=0$,
  after putting it into normal form
  it defines a point of $\cym_n$, hence $(E,D_p)$ is log canonical by
  the dimension-$n$ hypothesis (as in the proof of (1)).
  We claim that the pair $(E^{(m)},D_p)$ is log canonical as
  well. Indeed, changing the lattice $\la \bar e_k,\ 0\le k\le n\ra$
  of the fan $\fF_{\bE^{(m)}}$ to the sublattice
  $\la a_k\bar e_k,\ 0\le k\le n\ra$, we see that $\bE^{(m)}$ is a
  quotient of the locally trivial $\bP^1$-fibration
  $\wt{\bP}:=\Bl_{\mathrm pt}\bP^n \to \bP^{n-1}$ by the group
  $G=\prod_{k=0}^n \mu_{a_k}$. The variables $x_k$ become variables
  $y_k^{a_k}$ on $\wt{\bP}$ and $x_k^{s_k}$ becomes $y_k^d$, so
  $E^{(m)}$ is a $G$-quotient of the smooth subvariety
  $(\sum_{k=0}^n y_k^d = 0)\subset\wt{\bP}$ together with an infinite
  section. Then the pair $(E^{(m)},D_p)$ is log canonical by
  Corollary~\ref{cor:quotient-of-klt}.

  By Theorem~\ref{thm:minimal-form}, the variety $\wX_p=E\cup E^{(m)}$ is
  reduced, Cohen-Macaulay and has generically double crossing
  singularities along the nonnormal locus $D_p=E\cap E^{(m)}$. By the
  above, its normalization $(E,D_p) \sqcup (E^{(m)},D_p)$ is log
  canonical. So $\wX_p$ is semi log canonical. $\wX$ is normal since
  it is Cohen-Macaulay and $\codim\operatorname{Sing}\wX\ge2$. Then by
  the Inversion of Adjunction
  Theorem~\ref{thm:inversion-of-adjunction} the pair $(\wX,\wX_p)$ is
  log canonical in a neighborhood of $\wX_p$.
\end{proof}

\begin{theorem}\label{thm:lct-formula}
  Suppose that Theorem~\ref{mainthm:singularities} holds in dimension
  $n$. Then for the $(n+1)$-dimensional pair $(X,X_p)$ in a
  neighborhood of $X_p$ one has
  $\lct(X,X_p)=1-\nuu_p$.   
\end{theorem}
\begin{proof}
  Since the codimension of the singular locus of $X$ is $\ge2$ by
  Theorem~\ref{mainthm:family} and $X$ is Gorenstein, $X$ is normal.  
  Further, $X$ is canonical near the boundary $D$ by considering a
  canonical resolution of the ambient space $\amb$ as in the proof of
  Lemma~\ref{lem:canonical-near-bdry}. 
  So it suffices to work in the open part $X\setminus D$.
  If $\nuu_p=0$ then we are done by Lemma~\ref{lem:prep}.
  If $\nuu_p>0$, we make a cyclic base change to clear denominators so
  that $\nuu_p$ becomes an integer, run the minimal-form weighted
  blowup, and read off the lc threshold from the coefficient of the
  exceptional divisor.
  
  Let $r$ be a positive integer such that $r\nuu_p\in\bZ$ and let
  $(C',p')\to (C,p)$ be a degree-$r$ base change defined by the formula
  $\tau=\tau'{}^r$. The family
  $X'=X\times_C C'\to C'$ is of the same
  form~\eqref{eq:main-equation-again} but
  with coefficients
  $t'_I$ satisfying $\val_{p'} t'_I = r\val_p t_I$ and
  $\nuu_{p'} = r\nuu_p$. Take $m=\nuu_{p'}$ and consider the diagram
  $X'\gets \wX' \to X'{}^{(m)}$ of Theorem~\ref{thm:minimal-form}. One
  has $\nuu_{p'}^{(m)} = 0$, so by Lemma~\ref{lem:prep} the pair
  $(\wX', \wX'_{p'})$ is log canonical. 
  For $g\colon X'\to X$, we have $g^*(X_p)= rX'_{p'}$ and
  $g^*(K_X) = K_{X'} + (1-r)X'_{p'}$. Combined with
  Lemma~\ref{lem:blowup-discrepancy}, for any $c\in\bR$ this gives
  \begin{displaymath}
    (g\circ\varphi)^* (K_X + cX_p) = K_{\wX'}  
    + (1-r+rc)E'{}^{(m)}
    + (1-r+rc + r\nuu_p) E'
  \end{displaymath}
  By Theorem~\ref{thm:lc-under-maps}, the pair $(X,cX_p)$ is log
  canonical iff the pair in the right-hand-side of this equality is
  log canonical. 
  The coefficient of $E'$ in this expression is $\le 1$ iff
  $c\le 1-\nuu_p$. 
  %If $\nuu_p>1$ then already $(X,0)$ is not log canonical. 
  On the other hand,
  % if $\nuu_p\le 1$ then we show that the pair $(X,
  % (1-\nuu_p)X_p)$ is maximally log canonical, implying $\lct(X,X_p) =
  % 1-\nuu_p$.
  setting $c=1-\nuu_p$, the formula above becomes
  \begin{equation}\label{eq:pullback-ineq}
    (g\circ\varphi)^* (K_X + (1-\nuu_p)X_p) = 
    K_{\wX'} + (1-r\nuu_p) E'{}^{(m)} + E'
    \le K_{\wX'} + \wX'_{p'},
  \end{equation}
  By Theorem~\ref{thm:lc-under-maps} and monotonicity of log canonicity,
  the pairs $\big(\wX', (1-r\nuu_p) E'{}^{(m)} + E'\big)$ and
  $(X,(1-\nuu_p)X_p)$ are log canonical. They are maximally log
  canonical because the coefficient of $E'$ is $1$.
  Thus, %if $\nuu_p\le 1$ then
  $\lct(X,X_p)=1-\nuu_p$.
\end{proof}

\begin{corollary}\label{cor:family-lc}
  With assumptions as in Theorem~\ref{thm:lct-formula}, $X$ is log
  canonical in a neighborhood of $X_p$ iff $\nuu_p\le1$.
\end{corollary}
\begin{proof}
  By Theorem~\ref{thm:lct-formula}, $\lct(X,X_p)=1-\nuu_p$, so
  $\lct(X,X_p)\ge0$ iff $\nuu_p\le1$. Since $X_p$ is effective, log
  canonicity is monotone in the coefficient of $X_p$, hence
  $\lct(X,X_p)\ge0$ iff $(X,0)$ is log canonical, i.e.\ $X$ is log canonical.
\end{proof}

\begin{corollary}\label{cor:family-can}
  With assumptions as in Theorem~\ref{thm:lct-formula}, $X$ is
  canonical in a neighborhood of $X_p$ iff $\nuu_p<1$ and a general
  fiber of $X\to C$ is canonical.
\end{corollary}
\begin{proof}
Since $X$ is Gorenstein, it is canonical iff it is klt. 
A general fiber is canonical iff the centers of nonklt 
singularities of $X$ in a neighborhood of $X_p$ are contained
in $X_p$. And in this case $X$ is klt along $X_p$ iff
$\lct(X,X_p)=1-\nuu_p>0$, i.e.\ $\nuu_p<1$.
\end{proof}

\begin{proposition}\label{prop:implies-can}
  With assumptions as in Theorem~\ref{thm:lct-formula}, let
  $\{p_1,\dotsc, p_N\}$ be the points in $C$ over which the fibers are
  cuspidal, i.e.\ with $\nuu_{p}>0$, and let
  $U = C \setminus \{p_1, \dotsc, p_N\}$. Suppose that all
  $\nuu_{p}<1$ and that the restriction of
  $t_{0,\dotsc,0}\in H^0(C, L^{d_n})$ to $U$ is not equal to $d_{n-1}u^n$
  for a section $u \in H^0(U, M^{d_{n-1}})$ of a line bundle $M$ on $U$
  with $M^{s_n}\simeq L_U$. Then $X$ is canonical.
\end{proposition}
\begin{proof}
  By Corollary~\ref{cor:family-can} it suffices to check that not all
  fibers of $X\to C$ over $U$ are strictly log canonical. If they are
  then by Theorem~\ref{mainthm:singularities} in dimension $n$ the
  family $X_U\to U$ comes from a map $U\to \cym_{n-1}$. But then
  $t_{0,\dotsc,0}=d_{n-1}u^{s_n}$ by Remark~\ref{rem:constant-term}.
\end{proof}

\subsection{An extension of Kodaira's classification of singular fibers of
  elliptic surfaces to higher dimensions} 
\label{sec:kodaira}

\begin{remark} \label{rem:kodaira} Kodaira
  \cite{kodaira1963on-compact-III} classified fibers of relatively
  minimal elliptic surfaces $\wX\to C$. Assuming that $\wX$ has a
  section, one can equivalently talk of the Weierstrass fibrations
  $X\to C$ with canonical singularities, of the form
  \eqref{eq:main-equation-again} with $n=1$.  In Kodaira's notation
  $I_k$, $I^*_k$, $II$, \dots $IV^*$, the main part of the symbol --
  $I$, $I^*$, $II$, \dots $IV^*$ -- depends only on
  $\nuu_p = \min \ovval_p t_{i_0i_1}$ from
  Definition~\ref{def:min-reduced-valuation}. This can be easily seen
  from \cite[Table~IV.3.1]{miranda1989the-basic-theory}, see also
  \cite{dokchitser2013remark-on-tate}. The subscript $n$ depends on
  the valuation of the discriminant divisor at the point
  $p=(\tau=0)$. By analogy, we make the following definition for
  higher dimensions.
\end{remark}

\begin{definition}\label{def:kodaira}
  Consider a family $\pi\colon X\to C$ as in \eqref{eq:main-equation-again}
  with a fiber $X_p=\pi\inv(p)$, such that $X$ is
  canonical near $X_p$. We define the \emph{type} of $X_p$ to be the
  triple
  \begin{displaymath}
    \big( \nuu_p = \min_I \ovval_p\, t_I,\ 
    \operatorname{level}(j_{n}(p)),\
    \val_p(\Delta)\big),
  \end{displaymath}
  where $\Delta$ is the discriminant, and level was defined in
  Definition~\ref{def:level}. Here, the morphism $j_n\colon C\to \cymm_n$ is the
  extension of the map
  $C\setminus\text{\{cuspidal fibers\}} \to \cymm_n$ which exists by the
  valuative criterion of properness.
\end{definition}

\subsection{Canonical class formula}
\label{sec:canonical-formula}

Now, let $C$ be a smooth curve, not necessarily local, and
$\pi\colon X\to C$ be a family as in the previous section, meaning
that there are finitely many points $p_j \in C$ over which the fibers
are cuspidal cones~\eqref{eq:weighted-cone} 
(and, as $n$-folds, are not log canonical), with all the
other fibers being in $\cym_{n}$.
Assume that a general fiber of $\pi$ has canonical singularities.
We have an induced map
$C\setminus \{p_j\} \to \cymm_{n}$, and since the latter is proper,
also a map $j_{n}\colon C\to \cymm_{n}$. (Warning: it is a map to the
coarse moduli space $\cymm_n$ of the stack $\cym_{n}$; to get a map to the stack
$\cym_{n}$ one has to make a base change $C'\to C$ that kills the
monodromy, as in Section~\ref{sec:lct}.)

\begin{theorem}\label{thm:can-class-formula} 
  There is a natural equality of $\bQ$-line bundles
  \begin{equation}\label{eq:can-class-general}
    K_{X/C} = \pi^* \Big(
    \sum_{p\in C} \nuu_p p + j_{n}^* \cO_{\cymm_{n}}(1)
  \Big),
\end{equation}
where $\cO_{\cymm_{n}}(1)$ is the $\bQ$-line bundle on $\cymm_n$
associated to the line bundle $\cO_{\cym_n}(1)$.
\end{theorem}
\begin{proof}
  By Corollary~\ref{cor:minimal-form} and
  Lemma~\ref{lem:diagram-Ys}(6) we may assume that $\pi$ is in
  minimal form. Then $X$ is canonical by
  Corollary~\ref{cor:family-can} and
  Theorem~\ref{mainthm:singularities} proved in the next section. By
  Theorem~\ref{thm:lct-formula} one has $\nuu_p = 1-\lct(X,X_p)$.  A
  formula of the shape~\eqref{eq:can-class-general} exists for $\pi$
  by \cite{fujino2000canonical-bundle}, with some $\bQ$-line bundle
  $L$ called the ``moduli part'' in place of
  $j_{n}^* \cO_{\cymm_{n}}(1)$ in \eqref{eq:can-class-general}. We
  claim that $L = j_{n}^* \cO_{\cymm_{n}}(1)$.  By
  \cite{fujino2000canonical-bundle}, $L$ can be computed after a base
  change after which the family admits a semistable model. The
  statement now follows by Lemmas~\ref{lem:basic-family-canclass}
  and~\ref{lem:map-to-stack}. 
\end{proof}

\begin{remark}
  Working locally, let $X'\to (C,p)$ be obtained by deforming the
  coefficients $(t_I)$ defining $X$ to $(t'_I)$ so that the fibers of
  $X'$ are in $\cym_{n}$, i.e.\ the normalized minimal valuations are
  $\nuu_p'=0$. For $X'$, the formula \eqref{eq:can-class-general}
  holds by Lemmas~\ref{lem:basic-family-canclass}
  and~\ref{lem:map-to-stack}. How do the contributions in this formula
  change when one specializes from $X'$ to~$X$? The degree of the
  first term, increases by~$\nuu_p$.  On the other hand, the degree of
  the second term, with $j_{n}^* \cO_{\cymm_{n}}(1)$, drops by
  $\nuu_p$ because $t_I^{1/\delta_I}$ have a common multiple
  $\tau^{\nuu_p}$, which needs to be factored out and canceled to
  define, after a finite base change, a regular map to the weighted
  projective space $\cymm_{n}$. So the two contributions to
  \eqref{eq:can-class-general} cancel each other. This argument can be
  made into a second proof of Theorem~\ref{thm:can-class-formula}.
  % By adjunction, the $\bQ$-line bundles $K_{X/C}$ and
  % $K_{X'/C}$ are pullbacks of the same degree divisors on $C$. Thus,
  % \eqref{eq:can-class-general} holds for $X\to C$ as well.
\end{remark}

\presectionskip
\section{Proof of (log) canonicity in general}
\label{sec:sing-general}

As an application of the results of the previous section, we now prove
parts (2) and (3) of Theorem~\ref{mainthm:singularities}
unconditionally, without assuming that the singularities are
nondegenerate.

\subsection{Proof of Theorem~\ref{mainthm:singularities}}
\label{sec:proof-of-thm2} 

Recall that part (1) was proved in Section~\ref{sec:first-properties}.

\begin{proof}[Proof of parts (2,3) of
  Theorem~\ref{mainthm:singularities}]
  Let $[X]\in\cym_n$.  We already know from
  Lemma~\ref{lem:canonical-near-bdry} that $X$ is canonical near $D$,
  so it suffices to prove the (log)canonicity of the affine part
  $X\setminus D$ defined by Equation~\eqref{eq:f_t}. 
  Rewrite this
  equation in the form
  \begin{equation}\label{eq:f_t-as-family}
    f_t = x_0^{s_0} + \dotsb + x_{n-1}^{s_{n-1}} +
    \sum t_{i_0\dotsb i_{n-1}}(x_n)\, x_0^{i_0}\dotsb x_{n-1}^{i_{n-1}}
  \end{equation}
  for some polynomials $t_{i_0\dotsb i_{n-1}}(x_n) \in \bC[x_n]$.
  This presents $X\setminus D$ as a fibration over $\bA^1$ into affine
  varieties $X'\setminus D'$ of dimension $n-1$.
  By induction, we can
  assume that Theorem~\ref{mainthm:singularities} holds in dimension
  $n-1$.

  Thus, the log canonicity of $X$ would follow from
  Corollary~\ref{cor:family-lc} if we could show that for every
  $c\in \bA^1$ one has $\nuu_c\le 1$.  Similarly, the canonicity of
  $X$ would follow from Proposition~\ref{prop:implies-can} if we could
  show the strict inequalities $\nuu_c< 1$. Indeed, the constant term
  in $f_t$ is
  \begin{math}
    t_{0,\dotsc,0}(x_n) = x_n^{s_n} + \text{(lower terms in $x_n$)}.
  \end{math}
  Since $s_n$ is not divisible by $s_{n-1}$, obviously
  $t_{0,\dotsc,0}(x_n) \ne d_{n-2} u^{s_{n-1}}$ for a rational function
  $u(x_n)$.

  So we have to establish that for any $\ell=x_n-c$ there exists
  $I' = (i_0, \dotsc, i_{n-1})\in\cI_{n-1}$ such that the polynomial
  $t_{I'}(x_n)$ satisfies the inequality
  $\val_\ell t_{I'}(x_n) \le \delta_{I'}$ (resp.\ the strict
  inequality).  After replacing $x_n$ by $\ell+c$,
  Equation~\eqref{eq:f_t} becomes
  \begin{equation}\label{eq:f_t-as-family-with-ell}
    f_t = \sum_{k=0}^{n-1}x_k^{s_k} + \ell^{s_n} +
    \sum_{(I',i_n)} t'_{I',i_n}\, x^{I'} \ell^{i_n}, \qquad I'=(i_0, \dotsc, i_{n-1})
  \end{equation}
  with some new constants $t'_{I',i_n}$.  In this expression one still has
  $0\le i_k\le s_k-2$ for $0\le k\le n-1$ but now
  $0\le i_n\le s_n-1=d_{n-1}$.  There is at least one
  multi-index $(I',i_n)$ with $t'_{I',i_n}\ne 0$, otherwise, one would have
  $f_t=f_0$.  Indeed, $\ell^{s_n} = x_n^{s_n} - s_n
  cx_n^{s_n-1}+\dots$. The coefficient of $x_n^{s_n-1}$ in $f_t$ must be
  zero, so $c=0$, $\ell^{s_n}=x_n^{s_n}$ and $f_t=f_0$, which is a
  contradiction. So a   multi-index $(I',i_n)$ with $t'_{I',i_n}\ne 0$ exists.
  Now, in the above notations,
  $\val_\ell (t'_{I',i_n} x^{I'}\ell^{i_n}) = i_n$. One has
  \begin{eqnarray*}
    \frac{\delta_{I'}}{d_{n-1}} + 
    \sum_{k=0}^{n-1} \frac{i_k}{s_k} 
        = 1
    \quad\text{and}\quad
    \frac{i_n}{s_n} + 
    \sum_{k=0}^{n-1} \frac{i_k}{s_k} 
    < 1,
  \end{eqnarray*}
  which implies that
  \begin{displaymath}
    i_n < \delta_{I'} \frac{s_n}{d_{n-1}}
    = \delta_{I'} + 
    \frac{\delta_{I'}}{d_{n-1}} \le \delta_{I'} + 1
  \end{displaymath}
  Thus, $i_n \le \delta_{I'}$ and
  $\ovval_\ell (t'_{I',i_n} x^{I'}\ell^{i_n}) = i_n/\delta_{I'} \le 1$. So
  the reduced valuation at $\ell$ is $\nuu_\ell\le1 $,
  and $X$ is log canonical by
  Theorem~\ref{thm:lct-formula}.  Now assume that $\nuu_\ell=1$, so
  that $X$ is strictly log canonical. Then
  $i_n = \delta_{I'}$ for each $I'$, so we can write simply $t_{I'}$
  instead of $t_{I',i_n}$. Then the polynomial
  $f_t- \ell^{s_n}$ is homogeneous for the degrees
  $\deg x_k = d_{n-1}/s_k = a_{n-1,k}$ for $0\le k\le n-1$ and
  $\deg\ell=1$. Thus, $f_t-\ell^{s_n} = g_{t'}$ for a homogeneous
  degree $d_{n-1}$ polynomial $g_{t'}$ defining a hypersurface in
  $\cym_{n-1}$. 
  Finally, the coefficient of $x_n^{s_n-1}$ in $f_t$
  should be zero, which means that  the part of $f_t$ involving
  only $x_n$, without $x_0,\dotsc, x_{n-1}$, is
  \begin{displaymath}
    \ell^{s_n} + t'_{0,\dotsc,0} \ell^{s_n-1} =
    (x_n- \tfrac1{s_n} t'_{0,\dotsc,0} )^{s_n-1}
    (x_n+ \tfrac{s_n-1}{s_n} t'_{0,\dotsc,0} ),
  \end{displaymath}
  and one must have $\ell= x_n- \tfrac1{s_n}t'_{0,\dotsc,0}$. So $f_t$
  is obtained from $g_{t'}$ by the procedure described in
  Section~\ref{sec:En-1-En}. This completes the proof of
  Theorem~\ref{mainthm:singularities}.
\end{proof}

\subsection{The fiber structure of varieties \texorpdfstring{$[X]\in\cym_n$}{}}
\label{sec:fiber-structure}

Here we show that fiber structure on the open part $X\setminus D$ of a
variety in $\cym_n$ extends to $X$ itself, after a crepant blowup
$\wX\to X$.

\begin{lemma}
  There is a normal toric variety $\wt{\bP}$, a partial crepant
  resolution of singularities $\wt{\bP}\to \amb$ with an irreducible
  exceptional divisor, and a morphism
  $\wt{\bP} \to \bP(1,d_{n-1})=\bP^1$ whose restriction to
  $\bA^1\subset \bP(1,d_{n-1})$ is isomorphic to the projection
  $\bP(a_0,\dotsc, a_{n-1},1)\times\bA^1\to \bA^1$.
\end{lemma}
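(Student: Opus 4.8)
The plan is to do everything explicitly in toric language. First I describe $\bP_{n+1}=\bP(d_{n,0},\dotsc,d_{n,n},1)$ by its fan $\Sigma$ in $N:=\bZ^{n+1}$: take primitive ray generators $v_0,\dotsc,v_n$ equal to the standard basis and $v_{n+1}:=-\sum_{k=0}^{n}d_{n,k}v_k$, with maximal cones $\sigma_i:=\operatorname{Cone}(v_j:j\ne i)$ for $0\le i\le n+1$; the torus-invariant divisor $\{x_i=0\}$ corresponds to the ray $\bR_{\ge0}v_i$, so $H=\{x_{n+1}=0\}$ corresponds to $v_{n+1}$. Using $d_{n,n}=d_{n-1}$ and the Sylvester identity $s_n=d_{n-1}+1$, set
\[
  v' \;:=\; -\sum_{j=0}^{n-1} d_{n-1,j}\,v_j \;=\; \tfrac{1}{s_n}\bigl(d_{n-1}v_n+v_{n+1}\bigr);
\]
this is primitive in $N$ because $\gcd(d_{n-1,0},\dotsc,d_{n-1,n-1})=1$ by Lemma~\ref{lem:sylvester-properties}(1) applied at level $n-1$. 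I let $\wh{\bP}_{n+1}$ be the toric variety (automatically normal) of the star subdivision $\wh\Sigma$ of $\Sigma$ obtained by inserting the ray $\bR_{\ge0}v'$. Since $v'$ lies in the relative interior of the $2$-dimensional face $\operatorname{Cone}(v_n,v_{n+1})$, this subdivides precisely $\sigma_0,\dotsc,\sigma_{n-1}$ and leaves $\sigma_n,\sigma_{n+1}$ untouched; equivalently, $\wh{\bP}_{n+1}\to\bP_{n+1}$ is the weighted blowup of the codimension-two invariant subvariety $\{x_n=x_{n+1}=0\}=V\!\bigl(\operatorname{Cone}(v_n,v_{n+1})\bigr)$ with weights $(d_{n-1},1)$. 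It is proper birational, and its exceptional locus is the single new prime divisor $E$ attached to $v'$, hence irreducible.

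Next I check crepancy. Each subdivided cone $\sigma_k$ ($0\le k\le n-1$) is simplicial with primitive generators $\{v_j:j\ne k\}$ (the generator $v_{n+1}$ is primitive again by Lemma~\ref{lem:sylvester-properties}(1)), and inside it one has $v'=\tfrac{d_{n-1}}{s_n}v_n+\tfrac{1}{s_n}v_{n+1}$ with all remaining coefficients $0$. The toric discrepancy formula (see e.g.\ \cite{reid1985young-person}, \cite{kollar2013singularities-mmp}) then gives
\[
  a(E;\bP_{n+1})=\Bigl(\tfrac{d_{n-1}}{s_n}+\tfrac1{s_n}\Bigr)-1=\tfrac{d_{n-1}+1}{s_n}-1=0,
\]
so $K_{\wh{\bP}_{n+1}}=\phi^{*}K_{\bP_{n+1}}$ and the blowup is crepant.

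For the morphism to $\bP(1,d_{n-1})$ I use the linear map $\phi\colon N\to\bZ$ with $\phi(v_k)=0$ for $k\le n-1$, $\phi(v_n)=-1$, $\phi(v_{n+1})=d_{n-1}$ (so $\phi(v')=0$). On the big tori it is $[x]\mapsto[x_{n+1}:x_n]$, which lands in $\bP(1,d_{n-1})$ because $\deg x_{n+1}=1$ and $\deg x_n=d_{n,n}=d_{n-1}$, and the non-primitivity of $\phi(v_{n+1})=d_{n-1}$ records the $\mu_{d_{n-1}}$-stabilizer of the point $\{x_{n+1}=0\}$ of $\bP(1,d_{n-1})$. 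After the subdivision no cone of $\wh\Sigma$ contains both $v_n$ (the only generator with $\phi>0$) and $v_{n+1}$ (the only one with $\phi<0$), so $\phi$ maps every cone of $\wh\Sigma$ into $\bR_{\ge0}$ or $\bR_{\le0}$ and defines the desired toric morphism. Over the affine chart $\bA^1\subset\bP(1,d_{n-1})$ on which the weight-one coordinate is invertible -- it pulls back to $\{x_{n+1}\ne0\}$, with $\bA^1$-coordinate essentially $x_n$ -- the preimage is the toric variety of the subfan of those cones of $\wh\Sigma$ not containing $v_{n+1}$, namely $\operatorname{Cone}(v_0,\dotsc,v_n)$ together with $\operatorname{Cone}\bigl(v',v_n,\{v_j:j\le n-1,\ j\ne k\}\bigr)$ for $0\le k\le n-1$ and their faces. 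Writing $N=N_1\oplus N_2$ with $N_1=\bigoplus_{j=0}^{n-1}\bZ v_j\ni v'$ and $N_2=\bZ v_n$, each such cone is a product $(\text{cone in }N_1)\times(\text{cone in }N_2)$; the $N_1$-factors are exactly the ``omit-one-ray'' maximal cones of the fan of $\bP(d_{n-1,0},\dotsc,d_{n-1,n-1},1)=\bP_n$ (whose defining relation $\sum_j d_{n-1,j}v_j+v'=0$ with $\gcd(d_{n-1,j})=1$ is visible), while the $N_2$-factor fan is that of $\bA^1$. Hence the subfan is the product fan $\Sigma_{\bP_n}\times\Sigma_{\bA^1}$, so the preimage is $\bP_n\times\bA^1$ and $\phi$ restricts there to the projection to $\bA^1$, as required.

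The only genuinely delicate point is the choice of the new ray: it must be the \emph{primitive} generator of $\bR_{\ge0}(d_{n-1}v_n+v_{n+1})$, and the fact that passing to the primitive generator divides by $s_n$ is exactly what makes the blowup crepant (via $s_n=d_{n-1}+1$) and, simultaneously, scales the weights $d_{n,j}$ on $\sigma_0,\dotsc,\sigma_{n-1}$ down to $d_{n-1,j}=d_{n,j}/s_n$, so that the generic fiber is $\bP_n$ rather than $\bP_{n+1}$ decorated with a residual cyclic group. Everything else is routine toric bookkeeping: consistency of the star subdivision, matching $V(\cdot)$ with coordinate loci, and checking that all cones involved are simplicial so that the discrepancy formula applies.
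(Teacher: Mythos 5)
Your construction is correct and is essentially the paper's own proof: you insert the same new primitive ray $v'=\tfrac1{s_n}(d_{n-1}v_n+v_{n+1})=-\sum_j d_{n-1,j}v_j$, obtain crepancy from the Sylvester identity $s_n=d_{n-1}+1$, and realize the map to $\bP(1,d_{n-1})$ as the quotient/projection killing $v_0,\dotsc,v_{n-1},v'$. The extra details you supply (primitivity checks, which cones are subdivided, and the explicit product-fan identification of the preimage of $\bA^1$ as $\Sigma_{\bP_n}\times\Sigma_{\bA^1}$) are accurate elaborations of steps the paper leaves implicit.
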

\begin{proof}
  The fan of $\amb$ lies in the lattice $N\simeq\bZ^{n+1}$ generated by 
  vectors $v_0,\dotsc, v_{n+1}$ with a unique linear relation 
  \begin{displaymath}
    a_{0}v_0 + \dotsb + a_{n-1}v_{n-1} + a_{n}v_n + v_{n+1} = 0.
  \end{displaymath}
  The cones are simplicial cones generated by  proper subsets of
  $\{v_0,\dotsc,v_{n+1}\}$. Let 
  \begin{math}
    \displaystyle
    v_n'  = \frac{a_{n}v_n + v_{n+1}}{s_n}. 
  \end{math}
  Dividing the above relation by $s_n$ we get
  \begin{equation}\label{eq:v-n-prime}
    a_{n-1,0}v_0 + \dotsb + a_{n-1,n-1}v_{n-1} + v_n' = 0,
  \end{equation}
  which shows that $v_n'\in N$. The fan of $\wt{\bP}$ is
  obtained from that of $\amb$ by adding the vector $v_n'$ and then
  subdividing each cone containing both generators $v_n,v_{n+1}$ into
  a pair of cones, with generators $v_n,v_n'$ and $v_n',v_{n+1}$. This
  gives a proper birational morphism $\wt{\bP} \to \amb$, which
  is crepant because $a_{n} + 1 = d_{n-1}+1 = s_n$. The morphism
  $\wt{\bP} \to \bP(1,d_{n-1})$ is defined by the projection
  from $N$ to $N/\la v_0,\dotsc v_{n-1}, v_n'\ra$, which is 
  generated by the images $\bar v_n$ and $\bar v_{n+1}$ with a single
  relation $d_{n-1}\bar v_n + \bar v_{n+1} = 0$, giving the fan of
  $\bP(1,d_{n-1})$. The cones generated by the vectors
  $v_0,\dotsc v_{n-1}, v_n'$ form the fan of $\bP(a_0,\dotsc,a_{n-1},1)$, and the
  cones generated by the vectors $v_0,\dotsc v_{n-1}, v_n', v_n$ form
  the fan of $\bP(a_0,\dotsc,a_{n-1},1)\times \bA^1$, so over $\bA^1$ with the fan
  $\bR_{\ge0}\bar v_n$ the map restricts to the projection
  $\bP(a_0,\dotsc,a_{n-1},1)\times\bA^1\to\bA^1$.
  
\end{proof}

Since the intersection of $X_t$ with the linear subspace
$\{x_n=x_{n+1}=0\} \subset H$ is
transverse, we get:

\begin{corollary}
  For any $X_t\in \cym_n$ there is a crepant blowup $\wX_t \to X_t$
  with a projection $\wX_t\to \bP(1,d_{n-1})$ such that the fibers
  over $\bA^1\subset\bP(1,d_{n-1})$ are either varieties
  $X'_t\in \cym_{n-1}$ or the cuspidal fibers
  $(x_0^{s_0} + \dotsb + x_{n-1}^{s_{n-1}}=0)$, considered as
  a projective variety.  The finite part $X_t\setminus D_t$ is an open
  subset of $\wX_t$ and it is fibered over $\bA^1$ with the fibers
  $X_{t'}\setminus D_{t'} = (g_{t'}=0)$, considered as affine
  varieties.
\end{corollary}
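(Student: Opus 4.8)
The plan is to take $\wh X_t$ to be the strict transform of $X_t$ under the crepant blowup $\phi\colon \wh{\bP}_{n+1}\to\bP_{n+1}$ of the previous lemma, and to read off both the crepancy of $\wh X_t\to X_t$ and the fibre structure from the equation $F_t$, the new ingredient being the identification of the fibres by collecting monomials.

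First I would settle the crepancy. The centre of $\phi$ is the codimension-two toric stratum $Z=\{x_n=x_{n+1}=0\}\subset H$, and $F_t|_Z=x_0^{s_0}+\dotsb+x_{n-1}^{s_{n-1}}$ is not identically zero (equivalently, $X_t$ meets the coordinate strata transversally, as in the proof of Lemma~\ref{lem:X-sings}), so $Z\not\subset X_t$ and the exceptional divisor $E$ does not occur in $\phi^*X_t$; hence $\wh X_t=\phi^*X_t$ as divisors on $\wh{\bP}_{n+1}$. Since $X_t$ is Cartier (Lemma~\ref{lem:P,X-Gorenstein}) and $\phi$ is crepant, adjunction gives $K_{\wh X_t}=(\phi^*K_{\bP_{n+1}}+\phi^*X_t)|_{\wh X_t}=\phi^*K_{X_t}$, so $\wh X_t\to X_t$ is crepant; and $\wh X_t$, being a Gorenstein Cartier divisor in the normal Gorenstein toric variety $\wh{\bP}_{n+1}$ transversal to its toric strata, inherits only the ambient canonical toric singularities and is in particular normal, as in the proof of Lemma~\ref{lem:X-sings}.

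Next, the fibration. Because $Z\subset H$, $\phi$ is an isomorphism over $\bP_{n+1}\setminus H=\bC^{n+1}$, so $X_t\setminus D_t=\{f_t=0\}\subset\bC^{n+1}$ is an open subvariety of $\wh X_t$; there $x_{n+1}=1$, the affine coordinate $\lambda=x_n/x_{n+1}^{d_{n,n}}$ of $\bA^1\subset\bP(1,d_{n-1})$ becomes $\lambda=x_n$, and $\wh X_t\to\bP(1,d_{n-1})$ restricts on $X_t\setminus D_t$ to the surjection $(x_0,\dotsc,x_n)\mapsto x_n$. Its fibre over $\lambda$ is $\{f_t(x_0,\dotsc,x_{n-1},\lambda)=0\}\subset\bC^n$, and grouping the monomials of $f_t$ by the exponents $i'=(i_0,\dotsc,i_{n-1})$ rewrites this as $\{g_{t'(\lambda)}=0\}$ in the notation of \eqref{eq:cym_{n-1}-to-cym_n}, where $t'_{i'}(\lambda)=\sum_{i_n}t_{i'i_n}\lambda^{i_n}$ for $i'\ne 0$ and $t'_{0}(\lambda)=\lambda^{s_n}+(\text{a polynomial in }\lambda\text{ of degree}<s_n)$, the term $\lambda^{s_n}$ being the contribution of $x_n^{s_n}$. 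Hence $t'_{0}(\lambda)$ is monic of degree $s_n$, so $t'(\lambda)\ne 0$ for all but finitely many $\lambda$, and then the affine fibre is $X'_{t'(\lambda)}\setminus D'$ with $X'_{t'(\lambda)}\in\cym_{n-1}$, while at the finitely many remaining $\lambda$ it is $\{g_{t'}=0\}=\{x_0^{s_0}+\dotsb+x_{n-1}^{s_{n-1}}=0\}$. Passing to closures inside the fibre $\bP_n$ of $\wh{\bP}_{n+1}\to\bP(1,d_{n-1})$ — legitimate because $\wh X_t|_{\bA^1}\to\bA^1$ is flat with no vertical component (it is a Cartier divisor in $\wh{\bP}_{n+1}$ whose open subset $X_t\setminus D_t$ dominates $\bA^1$), and because $g_{t'(\lambda)}$ always contains the undeformed term $\sum_k x_k^{s_k}$, so its homogenisation is not divisible by the coordinate at infinity — identifies the projective fibre over $\lambda$ with $X'_{t'(\lambda)}\in\cym_{n-1}$ for all but finitely many $\lambda$, and with $\{G_{t'}=0\}\subset\bP_n$ otherwise, which is the assertion.

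The only genuinely non-formal point is the first paragraph — that the strict transform coincides with the total transform (immediate once $Z\not\subset X_t$) and is normal and crepant over $X_t$; the rest is bookkeeping. The one observation that drives the computation is that the primary part $x_0^{s_0}+\dotsb+x_n^{s_n}$ of $f_t$ is never deformed: this is what makes $t'_{0}(\lambda)$ monic in $\lambda$, so that the degenerate fibres occur at only finitely many points, and what keeps every fibre an honest hypersurface of the expected degree rather than collapsing in dimension.
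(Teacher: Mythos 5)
Your proposal is correct and follows the same route as the paper: the paper deduces the corollary in one line from the toric lemma plus the transversality of $X_t$ with $H$ (and then records the resulting affine description \eqref{eq:f_t-as-family}), and you are simply making that deduction explicit — strict transform equals total transform since the blowup center $\{x_n=x_{n+1}=0\}$ is not contained in $X_t$, crepancy by adjunction, and identification of the fibers by collecting monomials in $x_n$. The observation that $t'_{0}(x_n)$ is monic of degree $s_n$, so that only finitely many fibers degenerate to the cone, matches the paper's subsequent use of this fibration in Lemmas~\ref{lem:deg<w} and~\ref{lem:max-val=bdry}.
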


\presectionskip
\section{Two moduli interpretations of the family $\pi_n$}
\label{sec:moduli}

\subsection{As moduli of hypersurfaces}
\label{sec:moduli-hypersurfaces}

Let $A$ be a commutative ring with identity in which $s_k$ for $0\le
k\le n$ are invertible.
We recall that the weighted projective space $\amba$ and its infinite
divisor $\bD_A$ were defined in Section~\ref{sec:wps}. We assume $n>0$.

\begin{lemma}
  The automorphism group of $(\amba, \bD_A)$ fits into a split-exact
  sequence
  \begin{displaymath}
    1\to \bG_m \to \Aut_{\rm graded} A[x_0,\dotsc, x_{n+1}] \to \Aut(\amba,
    \bD_A) \to 1
  \end{displaymath}
  with $\bG_m$ acting on $A[x_0,\dotsc, x_{n+1}]$ by
  $x_k \to \lambda^{a_k} x_k$ for $0\le k\le n$,
  $x_{n+1}\to \lambda x_{n+1}$, and it can be identified with the subgroup
  of $\Aut_{\rm graded} A[x_0,\dotsc, x_{n+1}]$ consisting of graded
  automorphisms satisfying $x_{n+1}\mapsto x_{n+1}$.
\end{lemma}
\begin{proof}
  The automorphism group $\Aut(\amba,\bD_A)$ over $A$ is the subgroup
  of the group $\Aut(\amba,\cO(\bD_A)) = \Aut(\amba,\cO(1))$ sending the
  canonical section $1_{\bD_A} = x_{n+1}$ to itself because by
  Lemma~\ref{lem:wps-U} $i_*\cO_U(1) = \cO_{\amba}(1)$ for the smooth
  locus $U\subset\amba$ and $\bD_A$ is a Cartier divisor on $U$.  By
  Lemma~\ref{lem:wps-U} any $\varphi\in \Aut(\amba, \cO(1))$ sends
  each sheaf $\cO(m)$ to itself, so by Lemma~\ref{lem:wps-cohs} it
  induces a graded automorphism of $R=A[x_0,\dotsc,
  x_{n+1}]$. Conversely, a graded automorphism of $R$ induces an
  automorphism of $(\Proj R, \cO(1))$, and since
  $H^p(\amb_A,\cO(1)) = Ax_{n+1}$ for $n>0$, also an automorphism of
  $(\amba, \bD_A)$. It is clear that the kernel of the surjection
  is~$\bG_m$ acting as stated.
  % Since $x_{n+1}$ is the only variable of degree~$1$, $\bG_m$ -
  % canonically embeds into $ \Aut_{\rm graded} A[x_0,\dotsc, x_{n+1}]$.
\end{proof}

Any homogeneous degree-$d$ polynomial in $A[x_0,\dotsc, x_{n+1}]$ can be
written in the following form, 
with the extended index set $\cI_n^+$ defined in~\eqref{eq:extended-index-set}:
\begin{equation}\label{eq:Fermat}
  F = \sum_{k=0}^{n} c_k x_k^{s_k}  + 
  \sum_{I\in\cI_n^+} t_I x^I
  \cdot x_{n+1}^{\delta_I}
\end{equation}

\begin{definition}\label{def:fermat-nondegenerate}
  We say that $F$ is \emph{Fermat-nondegenerate} if all $c_k\in A^*$,
  and that $F$ is in
  \emph{normal form} or 
  \emph{short Weierstrass form} if all $c_k=1$ and $t_I=0$
  unless all indices $i_k$ in $I$ satisfy $i_k\le s_k-2$.
\end{definition}

The following result is a higher-dimensional analogue of the short
Weierstrass form for elliptic curves over $\bZ[1/6]$.

\begin{lemma}[Uniqueness of normal form]
  \label{lem:normal-form}
  For any Fermat-nondegenerate polynomial $F$ as in \eqref{eq:Fermat} there exists
  \begin{displaymath}
    \varphi\in \Aut(\amba, \bD_A)\colon \quad
    x_k = p_k(x'_j), \quad x_{n+1} = x_{n+1}'
  \end{displaymath}
  such that $F(x_k) = \Lambda \cdot F'(x_k')$ with $\Lambda\in A^*$
  and $F'(x'_k)$ is in normal form.

  Furthermore, if $F(x_k)$, $F'(x'_k)$ are two polynomials in normal
  form that differ by some $\varphi\in\Aut(\amba, \bD_A)$ and such that
  $F(x_k) = \Lambda\cdot F'(x'_k)$ for some $\Lambda\in A^*$ then
  there exists a unique $\lambda\in A^*$ such that
  \begin{displaymath}
    x_k = \lambda^{-a_k}x'_k
    \text{ for } 0\le k\le n,
    \ x_{n+1}=x'_{n+1} 
    \quad\text{ and }\quad
    t'_I = t_I \cdot \lambda^{\delta_I}
    \text{ for all } I.
  \end{displaymath}

\end{lemma}

\begin{proof}
  For $k=0,\dotsc, n$, replace $x_k$ by
  $x_k-\frac1{s_kc_k}\cdot\text{(the coefficient of $x_k^{s_k-1}$)}$ and
  note that the extra lower-order terms created have exponents
  $\le s_k- 2$, hence are permitted in normal form.
  From here, we find a normal form by a diagonal change of coordinates
  $x_k = u_k x_k'$ for some $u_k\in A^*$. Let us take
  \begin{displaymath}
    \Lambda := \prod_{k=0}^n c_k^{-a_k}
    \quad\text{and}\quad
    u_k := c_k^{-(a_k+1)/s_k}  \prod_{j\ne k} c_j^{-(a_j/s_k)}.
  \end{displaymath}
  The exponents in the expression for $u_k$ are integers by 
  the Congruence relation~\eqref{eq:congruence}.
  It is clear that
  \begin{math}
    u_k^{s_k} = c_k\inv \Lambda, 
  \end{math}
  for all $k$, so
  \begin{math}
    c_k x_k^{s_k} = c_k(u_k^{s_k}x_k'{}^{s_k}) = 
    \Lambda x_k'{}^{s_k},
  \end{math}
  which proves the first part. 

  For the second part, the change of coordinates must be
  diagonal, otherwise it would create a nonzero monomial $t_Ix^I$
  with some $i_k= s_k-1$. So we have $x_k = u_k x_k'$ and $u_k^{s_k} =
  \Lambda$ for all $k$. For each $I$ one has
  \begin{equation}
    \label{eq:tprime-I}
    t'_I = t_I \cdot \Lambda\inv \prod_{k=0}^n u_k^{i_k}.
  \end{equation}
  Since $\gcd(a_k)=1$, there exist integers $b_k$ such that
  $\sum_{k=0}^n a_kb_k = 1$. Define $\lambda:= \prod_{k=0}^n
  u_k^{-b_k}$. For $k\ne j$ one has $a_j/s_k\in\bZ$ and $s_ja_j/s_k
  =a_k$. Thus,
  \begin{displaymath}
    u_k^{a_j} = (u_k^{s_k})^{a_j/s_k} = \Lambda^{a_j/s_k} =
    u_j^{s_ja_j/s_k} = u_j^{a_k}.
  \end{displaymath}
  Therefore,   
  \begin{displaymath}
    \lambda^{-a_j} = \prod_{k=0}^n u_k^{a_j b_k} =
    \prod_{k=0}^n u_j^{a_k b_k} =
    u_j^{\sum a_kb_k} = u_j.
  \end{displaymath}
  In particular, $\Lambda = u_j^{s_j} = \lambda^{-s_ja_j} =
  \lambda^{-d}$. Now,
  \begin{displaymath}
    \lambda^{\delta_I} = \lambda^{d-\sum a_ki_k} =
    \lambda^d \prod_{k=0}^n \lambda^{-a_ki_k} =
    \Lambda\inv \prod_{k=0}^n u_k^{i_k},
  \end{displaymath}
  which together with \eqref{eq:tprime-I} proves the existence of
  $\lambda$. 
  Uniqueness of $\lambda$ is obvious
  from the relations $x_k = \lambda^{-a_k}x'_k$ and $\gcd(a_0,\dotsc, a_n)=1$.  
  (In fact, the $\gcd$ relation~\eqref{eq:linear-relation} implies
  that $\lambda = u_0\inv u_1 \dotsb u_n$.)
\end{proof}

\begin{remark}
  On the affine chart ${x_{n+1}=1}$ the proof works for the Fermat
  nondegenerate hypersurfaces with $0\le i_k\le s_k-1$ even without the
  assumption $\delta_I>0$.
\end{remark}

\begin{proof}[Proof of Theorem~\ref{mainthm:moduli-hypersurfaces}]
  Let $\pi\colon (\cX,\cD)\to S$ be a family which locally can be
  written as a family of Fermat-nondegenerate hypersurfaces given by
  an equation~\eqref{eq:Fermat} with invertible $c_k$. Here,
  ``locally'' could be interpreted to mean in either Zariski, \'etale
  or fppf topology.  We want to show that there exists a unique
  morphism $S\to \cym_n$ such that $\pi$ is the pull-back of the
  universal family $\pi_n$ over~$\cym_n$. A morphism to the weighted
  projective stack $\cym_n= \cP(\delta_I,\, I\in\cI_n)$ is defined by a
  line bundle $L$ on $S$ and the sections $t_I\in  H^0(S,L^{\delta_I})$,
  cf.\ \cite[Lemma 2.1.3]{abramovich2011stable-varieties}. Locally on
  $U_i=\Spec A_i$, the family $\cX\times_S U_i\to U_i$ is defined by
  an equation of the form~\eqref{eq:Fermat}. By
  Lemma~\ref{lem:normal-form}, it can be put into the normal
  form. Choose one such presentation for each~$U_i$. By the uniqueness
  part of Lemma~\ref{lem:normal-form}, on the overlaps
  the presentations differ by
  a uniquely defined element $\lambda_{ij} \in \Gamma(U_{ij}, \cO^*)$. The
  collection $(\lambda_{ij})$ is a $1$-cocycle and it defines an
  element of
  $H^1(S_{\rm Zar}, \bG_m)=H^1(S_{\rm et}, \bG_m)=H^1(S_{\rm fppf}, \bG_m)=\Pic(S)$.
  Thus, it defines an invertible sheaf $L$.
  Clearly,
  $t_I$ are sections of $L^{\delta_I}$ and $\pi$ is the pullback of
  $\pi_n\colon (\cX_n,\cD_n)\to \cym_n$.
\end{proof}

\subsection{As KSBA moduli}
\label{sec:moduli-ksba}

In this section, we work in the category of locally Noetherian schemes
over $\bC$.  We refer to \cite{kollar2023families-of-varieties} for
the KSBA theory. The main definition is this:

\begin{definition}
  A KSBA-stable pair $(X,B)$ consists of a reduced projective variety
  $X$ and a divisor $B=\sum b_iB_i$, with effective $\bZ$-Weil
  divisors $B_i$ and $0<b_i\le 1$ such that $(X,B)$ has slc
  singularities and $K_X+B$ is ample.
\end{definition}

By Theorems~\ref{mainthm:family} and~\ref{mainthm:singularities}, the
pairs $(X,D)$ appearing in the family $\pi_n$ are KSBA stable. In
fact, there is a surprise: assuming $n\ge2$, these pairs are log
canonical rather than merely semi log canonical. The slc condition
allows ordinary double crossing singularities in codimension~$1$, and
typically for KSBA spaces some of the varieties are reducible. But in
the family $\pi_n$ all varieties are normal and irreducible.

Since for every pair $[(X,B)]\in\cym_n$ the dualizing sheaf $\omega_X$
is trivial and $D$ is a $\bQ$-Cartier divisor, most of the
complications which are present in the general case of the KSBA theory
disappear and we can restrict to the simplest version of the
definition for a family of stable pairs:

\begin{definition}\label{def:ksba-family}
  We define $\cM$ to be the stack of families
  $\pi\colon (\cX,\cD)\to S$ over Noetherian schemes $S$ such that
  both $\cX\to S$ and $\cD\to S$ are flat families of varieties
  and $\cD\subset\cX$ is a reduced effective $\bZ$-divisor such that:
  \begin{enumerate}
  \item $d\cD$ is Cartier and relatively ample.
  \item $\omega_{\cX/S}$ is $\pi$-trivial, i.e.\ it equals $\pi^*(L)$
    for some line bundle $L$ on $S$.
  \item Every geometric fiber $(X,D)$ is KSBA stable.
  \item For every $m\ge 0$ the sheaves
    $\omega_{\cX/S}^m(m\cD)$, equivalently $\cO_{\cX/S}(m\cD)$, are flat over $S$ and for
    every $s\in S$ one has
  \begin{math}
    \cO_\cX(m\cD)|_{\cX_s} = \cO_{\cX_s}(m\cD_s).
  \end{math}
  \end{enumerate}
\end{definition}

\begin{remark}
  This definition is a version of ``Alexeev stability''
  \cite[Section 6.4]{kollar2023families-of-varieties}.
  Weakening this definition in some natural way, for example removing
  conditions (1) or (2) and compensating that by giving more technical
  conditions, merely leads to an open substack of $\cM$. Also, over a
  reduced base $S$, (4) follows from (1,2,3) by \cite[Corollary
  4.33]{kollar2023families-of-varieties}.
\end{remark}

\begin{proof}[Proof of Theorem~\ref{mainthm:moduli-ksba}]
  We first note that as a combination of the definition of the family
  $\pi_n$ in Section~\ref{sec:def-of-family} and
  Lemmas~\ref{lem:basic-family-canclass},~\ref{lem:cO_X(m)-are-flat},
  $\pi_n$ \emph{is} a family of KSBA-stable pairs of
  Definition~\ref{def:ksba-family}.  
  Now let $\pi\colon (\cX,\cD)\to S$ be any family as in
  Definition~\ref{def:ksba-family}, and $s\in S$ be a point such
  that the fiber $(X,D)= (\cX_s,\cD_s)$ is in $\cym_n$.  By
  Lemma~\ref{lem:cohs-cX}, for any $m\ge0$ the cohomology groups
  $H^i(X, \cO(mD))$ vanish for $i>0$. By semicontinuity
  \cite[Theorem~12.8]{hartshorne1977algebraic-geometry}, this is also
  true for any $s'$ in an open neighborhood $U\ni s$. Then by the
  Cohomology and Base Change
  \cite[Theorem~12.11]{hartshorne1977algebraic-geometry} the
  $\cO_U$-algebra
  \begin{displaymath}
    R(\cX_U, \cD_U) = \oplus_{m\ge 0}\, \pi_* \cO_{\cX_U}(m\cD_U)
  \end{displaymath}
  is locally free. By shrinking $U$, we can assume that it is a free
  $\cO_U$-algebra.  For the central fiber $(X,D)$ we can choose 
  generators $x_k$ of $R(X, D)$ of degree $a_k$ for $0\le k\le n$ and
  degree~$1$ for $k=n+1$, and after shrinking $U$
  we can extend them to generators of $R(\cX_U,\cD_U)$.
  The choice of $x_k$ gives a homomorphism of graded $\cO_U$-algebras
  \begin{math}
    \Sym^*(V) \to R(\cX_U, \cD_U)
  \end{math}
  where $V=\oplus_{k=0}^{n+1} V_k$ is defined as in
  Section~\ref{sec:def-of-family} with $L=\cO_U$. This homomorphism is
  surjective when restricted to $s\in U$, so by Nakayama's lemma it is
  surjective over $U$ after shrinking it, with a kernel
  $\cO_U \cdot F$.  This gives a closed embedding
  $(\cX_U,\cD_U) \subset \bP(V)$ and shows that $\cX_U\to U$ is a
  family of hypersurfaces $(F=0)$. Since the central fiber is Fermat
  nondegenerate, the same is true over $U$, after possibly shrinking
  it further. A different choice of $x_k$ defines an automorphism of
  $V$ and automorphism of $(\amb_U,\bD_U)$. By
  Lemma~\ref{lem:normal-form} we can put it into a normal 
  form.  So $(\cX_U,\cD_U)\to U$ is a family in $\cym_n$.

  This proves that the functor of KSBA-stable pairs $(X,D)$ is open in
  $\cym_n$. Since the entire family $\pi_n$ is a family of KSBA-stable
  pairs, it follows that $\cym_n$ is a connected component of the
  moduli stack of KSBA-stable pairs. 
\end{proof}

\presectionskip
\section{Hodge theory of Calabi-Yau varieties 
  \texorpdfstring{$X$}{X} in \texorpdfstring{$\cym_n$}{E_n}}

\label{sec:hodge-theory}

\subsection{Results of Esser-Totaro-Wang and Singh} 
\label{sec:known-hodge}

The Calabi-Yau hypersurfaces $X\subset \amb$, $[X]\in\cym_n$, have
appeared in Esser-Totaro-Wang \cite{esser2023varieties-of-general,
  esser2022calabi-yau} and Singh \cite{singh2025smooth-calabi}, where
they are denoted by $X^{(n)}_2 \subset \bP_2^{(n+1)}$. We list several
facts about these varieties proved in these papers.

\begin{theorem}[\cite{esser2022calabi-yau, singh2025smooth-calabi}]
  \label{thm:self-dual}
  The Fano polytope of $\amb$ is linearly isomorphic, as a
  lattice polytope, to its polar dual. Thus, the family of Calabi-Yau
  hypersurfaces $X\in |-K_{\amb}|$ is self mirror-dual.
\end{theorem}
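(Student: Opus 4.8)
\textbf{Proof proposal for Theorem~\ref{thm:self-dual}.}

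The statement is about the weighted projective space $\bP_{n+1} = \bP(d_{n,0},\dotsc,d_{n,n},1)$ and its associated Fano polytope. The plan is to write down both the Fano polytope and its polar dual explicitly in terms of the Sylvester-number data, and then exhibit a $\mathrm{GL}_{n+2}(\bZ)$-matrix that carries one to the other. The Fano polytope $P$ of $\bP_{n+1}$ is the convex hull of the lattice vectors $v_0,\dotsc,v_{n+1}$ in the lattice $N$ from Section~\ref{sec:fiber-structure}, which is $\bZ^{n+2}$ modulo the single relation $\sum_{k=0}^n d_{n,k}v_k + v_{n+1}=0$; equivalently, $P$ is the simplex with vertices the standard basis vectors $\vec e_0,\dotsc,\vec e_{n+1}$ of $\bZ^{n+2}$ intersected with the hyperplane $\sum_{k=0}^n d_{n,k}\xi_k + \xi_{n+1} = 1$ (so the origin is the unique interior lattice point, since $-K$ is Cartier by Lemma~\ref{lem:P,X-Gorenstein}). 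The polar dual $P^\vee$ lives in the dual lattice $M$ and is cut out by the inequalities $\langle m, v_k\rangle \ge -1$ for all $k$; I would compute its vertices as the $m_k$ dual to the facets of $P$, i.e. $m_k$ is the (unique, because the facet is a unimodular simplex in the Gorenstein case) lattice functional that equals $-1$ on $v_j$ for $j\ne k$ and takes the appropriate positive value on $v_k$.

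First I would set up coordinates so that $N = \bZ^{n+1}$ with $v_0,\dotsc,v_n$ the standard basis and $v_{n+1} = -(d_{n,0},\dotsc,d_{n,n})$; then the facets of $P=\mathrm{conv}(v_0,\dotsc,v_{n+1})$ are indexed by omitting one vertex. The facet omitting $v_{n+1}$ has supporting functional $(1,\dotsc,1)$ (value $1$ on each $v_k$, $k\le n$; value $-\sum d_{n,k} = -(d_n-1)$ on $v_{n+1}$, which by the Egyptian fraction identity Lemma~\ref{lem:sylvester-properties}(2) equals $-(d_n-1)$ — I need this to be $\le -1$, which it is). The facet omitting $v_j$ for $0\le j\le n$ has a supporting functional whose $j$-th coordinate I can solve for using the relation; the key arithmetic input is exactly Lemma~\ref{lem:sylvester-properties}(3)--(4), namely $d_{n,j}\equiv -1 \pmod{s_j}$ and $\sum_{k\ne j}d_{n,k}\equiv -1\pmod{d_{n,j}}$, which is what guarantees these functionals are \emph{integral} (this is the Gorenstein/self-duality miracle). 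Collecting the resulting vertices of $P^\vee$ into a matrix and comparing with the vertex matrix of $P$, I expect to find that the linear map sending $v_k \mapsto$ (the vertex of $P^\vee$ dual to the facet omitting $v_k$) is given by an explicit integer matrix built from $d_n$ and the $d_{n,k}$ whose determinant is $\pm1$ — the determinant computation should reduce, after row/column operations that subtract multiples of rows, to the identity $\sum_k d_{n,k}+1 = d_n$ again.

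The main obstacle is verifying integrality and unimodularity of the duality map uniformly in $n$: one must show the $(n+2)\times(n+2)$ matrix relating the two vertex sets has all integer entries (this is where the three congruences in Lemma~\ref{lem:sylvester-properties} are indispensable — a naive guess at $P^\vee$'s vertices would have denominators $s_j$ or $d_{n,j}$) and determinant $\pm1$. A clean way to organize this is to normalize so that the vertex of $P^\vee$ opposite the facet $\{v_{n+1}\ \mathrm{omitted}\}$ is $(1,1,\dotsc,1)$ up to scaling and then check that each remaining dual vertex, reduced modulo the defining relation of $N$, lands in $N^\vee=M$; the congruence $d_{n,j}\equiv -1\pmod{s_j}$ makes the apparent denominator $s_j$ disappear and the congruence $\sum_{k\ne j}d_{n,k}\equiv -1\pmod{d_{n,j}}$ handles the facet opposite $v_j$. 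Since Esser--Totaro--Wang and Singh have already established this, I would also simply cite \cite{esser2022calabi-yau, singh2025smooth-calabi} for the statement and present the matrix only as an explicit model; the self-mirror-dual conclusion for the Calabi--Yau family $X\in|-K_{\bP_{n+1}}|$ then follows immediately from the Batyrev mirror construction applied to a reflexive simplex that is isomorphic to its own dual.
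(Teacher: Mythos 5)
The paper offers no proof of this theorem --- it is imported verbatim from Esser--Totaro--Wang and Singh --- so your decision to cite those references is exactly what the author does, and your lattice computation is a reasonable sketch of what the cited papers actually verify. Two cautions about the sketch, though. First, integrality of the dual vertices is cheaper than you suggest: in your coordinates the vertex of $P^\vee$ opposite $v_j$ ($0\le j\le n$) is $-(1,\dotsc,1)+s_j e_j^*$, since $d_{n,j}u_{j,j}=1+\sum_{k\ne j}d_{n,k}=d_n-d_{n,j}$, so the would-be denominator cancels by the Egyptian fraction identity alone; the congruences of Lemma~\ref{lem:sylvester-properties}(3)--(4) are just restatements of this and encode reflexivity (the Gorenstein condition), not self-duality. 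Second, and more importantly, the final unimodularity check can \emph{not} ``reduce to the identity $\sum_k d_{n,k}+1=d_n$ again'': that identity holds for every Gorenstein weighted projective space, most of which are not self-dual (e.g.\ $\bP^2$, whose dual simplex has three times the volume). The genuine content of the theorem is that the specific Sylvester recursion $s_{n+1}=1+\prod_{k\le n}s_k$ makes the vertex matrices of $P$ and $P^\vee$ conjugate by a $\operatorname{GL}_{n+1}(\bZ)$ matrix, and any honest proof must use that recursion, not merely reflexivity. (A small additional slip: the facets of $P$ are generally not unimodular simplices; uniqueness of the supporting functional needs only that the remaining $n+1$ vertices affinely span the facet hyperplane.)
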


\begin{remark}
  This generalizes to dimensions $n\ge3$ the fact that the triangle
  singularity $D_{2,3,7}$ is self-dual for Arnold's strange duality
  and that the moduli space of $U\oplus E_8$-polarized K3 surfaces is
  self mirror-dual.
\end{remark}

\begin{theorem}[\cite{singh2025smooth-calabi}]
  \label{thm:crepant-resolution}
  The weighted projective space $\amb$ admits a projective
  crepant resolution of singularities. For a generic hypersurface
  $X\in |-K_{\amb}|$ it induces a projective crepant resolution of
  singularities $\phi\colon \wX\to X$.
\end{theorem}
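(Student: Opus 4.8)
The plan is to reduce everything to toric combinatorics and then pass to a general anticanonical hypersurface by Bertini. By Lemma~\ref{lem:P,X-Gorenstein}, $\bP_{n+1}=\bP(d_{n,0},\dots,d_{n,n},1)$ is a Gorenstein toric Fano with canonical cyclic quotient singularities; let $\Sigma$ be its fan, with primitive ray generators $v_0,\dots,v_{n+1}$ satisfying $\sum_{k=0}^n d_{n,k}v_k+v_{n+1}=0$, and on each maximal cone $\sigma$ let $m_\sigma$ in the dual lattice be the Gorenstein functional with $\langle m_\sigma,v_\rho\rangle=1$ for the rays $\rho\subset\sigma$. A refinement $\Sigma'$ of $\Sigma$ is crepant precisely when every new ray is generated by a lattice point $v$ with $\langle m_\sigma,v\rangle=1$ for the cone $\sigma$ containing it --- equivalently, a lattice point lying on a facet of the Fano polytope --- and $\Sigma'$ is a resolution precisely when all of its maximal cones are unimodular. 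So the first task is to produce a regular full unimodular triangulation of the boundary of the Fano polytope of $\bP_{n+1}$ using all of its lattice points; such a $\Sigma'$ then gives a projective crepant resolution $\widetilde{\bP}_{n+1}\to\bP_{n+1}$.

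I would build this triangulation by induction on $n$, exploiting the fibred structure of Section~\ref{sec:fiber-structure}. The crepant weighted blowup $\wh{\bP}_{n+1}\to\bP_{n+1}$ there adds the ray $v_n'=(d_{n-1}v_n+v_{n+1})/s_n$, which is a legitimate crepant ray since $\langle m_\sigma,v_n'\rangle=(d_{n-1}+1)/s_n=1$, after which $\wh{\bP}_{n+1}$ fibres over $\bP^1$ with all fibres over $\bA^1\subset\bP^1$ isomorphic to $\bP_n$; the singular cones of $\wh{\bP}_{n+1}$ lying over $\bA^1$ are then exactly $(\operatorname{Sing}\bP_n)\times\bA^1$, while the remaining singular cones --- of strictly smaller multiplicity --- lie over a neighbourhood of the other special fibre. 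One spreads an inductively obtained crepant resolution of $\bP_n$ over the $\bA^1$-part, resolves the remaining cones by the same procedure near the special fibre, and checks that the two agree over $\bP^1\setminus\{0,\infty\}$, where both restrict to $\widetilde{\bP}_n\times\bG_m$. That the local multiplicities $d_{n,k}$ assemble into a \emph{unimodular} triangulation is ultimately forced by the Egyptian fraction identity of Lemma~\ref{lem:sylvester-properties}(2), which is precisely the identity making the Fano polytope reflexive; by self-duality (Theorem~\ref{thm:self-dual}) the same triangulation also triangulates the polar dual, a convenient bookkeeping device. This combinatorial step is where the real work lies, and is the main obstacle: Gorenstein canonical toric singularities need not admit crepant resolutions once $n\ge3$, so one genuinely needs the special arithmetic of the Sylvester weights. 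It is carried out in \cite[\S3.2]{singh2025smooth-calabi}, and I would follow that argument.

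Granting a projective crepant resolution $\phi_{\bP}\colon\widetilde{\bP}_{n+1}\to\bP_{n+1}$, the hypersurface statement is the standard argument. Since $\phi_\bP$ is crepant, $-K_{\widetilde{\bP}_{n+1}}=\phi_\bP^*(-K_{\bP_{n+1}})$; the anticanonical system $|-K_{\bP_{n+1}}|=|\cO(d_n)|$ is basepoint-free (each $d_{n,k}$ divides $d_n$, so the monomials $x_k^{s_k}$ together with $x_{n+1}^{d_n}$ have no common zero), hence its pullback $\phi_\bP^*|-K_{\bP_{n+1}}|=|-K_{\widetilde{\bP}_{n+1}}|$ is basepoint-free on the smooth variety $\widetilde{\bP}_{n+1}$. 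By Bertini a general $\hX\in|-K_{\widetilde{\bP}_{n+1}}|$ is smooth, and it is the strict transform of a general $X\in|-K_{\bP_{n+1}}|$, since no exceptional divisor of $\phi_\bP$ is a fixed component of a basepoint-free system; thus $\phi_\bP^*X=\hX$ and $\phi\colon\hX\to X$ is crepant by adjunction, $K_{\hX}=(K_{\widetilde{\bP}_{n+1}}+\hX)|_{\hX}=\phi_\bP^*(K_{\bP_{n+1}}+X)|_{\hX}=\phi^*K_X$, consistent with both sides being trivial as in Lemma~\ref{lem:P,X-Gorenstein}. The only mildly delicate point here is that a general $\hX$ be smooth everywhere along the exceptional locus, which is immediate from basepoint-freeness once $\widetilde{\bP}_{n+1}$ is in hand.
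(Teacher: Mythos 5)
Your proposal is correct and consistent with the paper, which gives no proof of this statement at all but simply attributes it to \cite[\S 3.2]{singh2025smooth-calabi}; you likewise defer the genuinely hard step --- producing a regular unimodular crepant triangulation of the fan of $\bP_{n+1}$ --- to that same reference. The material you add around it (the translation into toric combinatorics, and the basepoint-freeness/Bertini/adjunction argument deducing the smooth crepant model $\hX\to X$ of a general anticanonical hypersurface) is standard and correct.
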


The paper \cite{esser2022calabi-yau} proves several facts about the
orbifold cohomology groups of a generic Calabi-Yau variety
$X\in\cym_n$.  By \cite[Corollary 1.5]{yasuda2004twisted-jets} they
coincide with the ordinary cohomology groups of a crepant resolution
$\wX$.

\begin{theorem}[{{\cite[Theorem 5.1]{esser2022calabi-yau}}}]
  \label{thm:hpq-etw}
  Let $[X]\in\cym_n$.  Then
  \begin{enumerate}
  \item $h^{p,q}\orb(X) = 0$ unless $p=q$ or $p+q=n$.
  \item The sum of the orbifold Betti numbers of $X$ is $2\mu$. 
  \item If $n$ is odd then $\dim H^n\orb(X, \bQ) = \mu$.
  \end{enumerate}
\end{theorem}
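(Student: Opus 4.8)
The plan is to reduce everything to the cohomology of a crepant
  resolution and then to an elementary monomial count governed by
  Lemma~\ref{lem:sylvester-properties}.  Fix a quasi-smooth
  $X\in\cym_n$ (for instance a generic one, which is nondegenerate by
  Section~\ref{sec:newton}).  By Theorem~\ref{thm:crepant-resolution}
  there is a crepant resolution $\phi\colon\hX\to X$ induced by a
  smooth crepant toric resolution $\wt{\bP}_{n+1}\to\bP_{n+1}$, so that
  $\hX$ is a smooth Calabi-Yau $n$-fold and a nondegenerate
  hypersurface in $\wt{\bP}_{n+1}$.  By \cite{yasuda2004twisted-jets}
  one has $h^{p,q}\orb(X)=h^{p,q}(\hX)$, and since stringy
  (equivalently orbifold) Hodge numbers are deformation invariant
  among klt Calabi-Yau fibers and $\cym_n$ is irreducible, it suffices
  to treat this $\hX$.

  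Statement~(1) comes from the Danilov-Khovanskii description of the
  cohomology of a nondegenerate hypersurface in a smooth complete toric
  variety: in degrees $\ne n$ the Hodge numbers of $\hX$ coincide with
  those of $\wt{\bP}_{n+1}$ (by weak Lefschetz below degree $n$, and by
  Poincar\'e duality above it), while
  $H^n(\hX)=H^n(\wt{\bP}_{n+1})\oplus H^n_{\mathrm{prim}}$.  A smooth
  toric variety has cohomology of Hodge-Tate type, so
  $h^{p,q}(\wt{\bP}_{n+1})=0$ for $p\ne q$; hence $h^{p,q}(\hX)=0$
  unless $p=q$ or, from the primitive summand, $p+q=n$.

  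For~(3), identify the primitive part $H^n_{\mathrm{prim}}\subseteq
  H^n(\hX)$ --- the object denoted $H^n\orb(X,\bQ)$ in the statement
  --- with the Griffiths-Dolgachev-Steenbrink Jacobian-ring model.  Let
  $F$ be the degree-$d_n$ defining polynomial of
  $X\subset\bP_{n+1}=\bP(d_{n,0},\dotsc,d_{n,n},1)$ and set
  \begin{displaymath}
    R(F)=\bC[x_0,\dotsc,x_{n+1}]\Big/\Big(\tfrac{\pd F}{\pd x_0},
    \dotsc,\tfrac{\pd F}{\pd x_{n+1}}\Big),\qquad
    \deg x_k=d_{n,k}\ (k\le n),\quad\deg x_{n+1}=1.
  \end{displaymath}
  Since $\sum_{k\le n}d_{n,k}+1=d_n$
  (Lemma~\ref{lem:sylvester-properties}(2)), the residue formula gives
  $H^n_{\mathrm{prim}}\cong\bigoplus_{q\ge0}R(F)_{qd_n}$.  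As $X$ is
  quasi-smooth, $R(F)$ has the same Hilbert function as for the Fermat
  polynomial $F=\sum_{k\le n}x_k^{s_k}+x_{n+1}^{d_n}$, whose Jacobian
  ring has monomial basis $x_0^{a_0}\dotsb x_{n+1}^{a_{n+1}}$ with
  $0\le a_k\le s_k-2$ for $k\le n$ and $0\le a_{n+1}\le d_n-2$.  For
  each tuple $(a_0,\dotsc,a_n)$ there is a unique $a_{n+1}\in[0,d_n-1]$
  making the total degree divisible by $d_n$, and it lies in the
  allowed range $[0,d_n-2]$ unless
  $\sum_{k\le n}a_kd_{n,k}\equiv1\mymod{d_n}$.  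By
  Lemma~\ref{lem:sylvester-properties}(1),(3) one has
  $\sum_{k\le n}a_kd_{n,k}\equiv-a_j\mymod{s_j}$ for each $j$, so the
  exceptional case would force $a_j=s_j-1$ for every $j$ --- impossible
  since $a_j\le s_j-2$.  Therefore $\dim H^n\orb(X,\bQ)=
  \prod_{k=0}^n(s_k-1)=\mu$.  Finally, (2) follows by combining (1),
  (3), Poincar\'e duality on $\hX$, and the self-mirror-duality of
  Theorem~\ref{thm:self-dual}, which makes the Hodge diamond of $\hX$
  symmetric under $h^{p,q}=h^{n-p,q}$: for odd $n$ the conditions
  $p=q$ and $p+q=n$ are disjoint, so $\sum_i b_i(\hX)=\sum_p
  h^{p,p}(\hX)+b_n(\hX)$, the symmetry reindexes $\sum_p h^{p,p}(\hX)=
  b_n(\hX)$, and $b_n(\hX)=b_n(\wt{\bP}_{n+1})+\mu=\mu$ as a toric
  variety has no odd cohomology; for even $n$ the same holds after
  absorbing the single overlap $h^{n/2,n/2}$ using the symmetry
  (equivalently, (2) is read off Batyrev's combinatorial expression
  for the stringy $E$-function, whose ambient and primitive parts are
  interchanged by the self-duality).

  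The main difficulty is not the lattice-point count in~(3), which is
  immediate once the residue formula is set up, but the geometric
  input feeding it: that a generic member of $\cym_n$ is nondegenerate
  with Newton polytope large enough for $\wt{\bP}_{n+1}$ to be an
  adapted smooth toric compactification (so Danilov-Khovanskii applies
  to $\hX$), and that $h^{p,q}\orb$ is constant over all of $\cym_n$.
  Both can be side-stepped by arguing inside the Batyrev-Borisov toric
  framework, where $\bP_{n+1}$ is a Gorenstein toric Fano and the
  quantities in~(1)--(3) are purely combinatorial.
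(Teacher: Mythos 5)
First, a remark on the comparison you asked for: the paper does not prove this theorem at all --- it is quoted from \cite{esser2022calabi-yau}, Theorem 5.1, and used as a black box --- so your attempt has to be judged on its own merits rather than against an internal argument.

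The central gap is in part (3). You identify $H^n\orb(X,\bQ)$ with the Griffiths--Steenbrink primitive cohomology of the quasi-smooth hypersurface $X$, computed by the Jacobian ring $\bigoplus_q R(F)_{qd_n}$. But the Jacobian ring computes the (primitive part of the) ordinary cohomology of $X$ itself, i.e.\ only the \emph{untwisted} sector of orbifold cohomology; $H^{p,q}\orb(X)$ also receives age-shifted contributions $H^{p-a,q-a}$ from the twisted sectors supported on the singular strata of $X$ along $D$ (equivalently, from the exceptional locus of the crepant resolution $\widehat{X}\to X$). These contributions are precisely the difference between the two groups, and your argument never addresses them. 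The discrepancy is already numerical for $n=2$: your count gives $\mu=12$, while $\dim H^2\orb(X)=b_2(\widehat{X})=22$ for the K3 surface. For $n\ge3$ the singular strata are positive-dimensional hypersurfaces $\{\sum_{i\in S}x_i^{s_i}=0\}$ in smaller weighted projective spaces, and their middle cohomology, shifted by the relevant ages, can a priori land in $p+q=n$; ruling this out is the actual content of the ETW computation, carried out via the orbifold generating function $Q(z,w)$ --- exactly the kind of twisted-sector analysis the paper performs for $h^{1,1}\orb$ in Lemma~\ref{lem:Nl}. Your closing paragraph locates the difficulty in nondegeneracy and the choice of toric compactification; the real difficulty is the twisted sectors. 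Secondary issues: for even $n$ your bookkeeping in (2) does not close, since (1) and (3) give $\sum_i b_i=\sum_p h^{p,p}+\dim H^n\orb-h^{n/2,n/2}$ and your mirror identity $\sum_p h^{p,p}=b_n$ then yields $2\mu-h^{n/2,n/2}$, not $2\mu$ (``absorbing the single overlap'' does not repair this); and the reduction to a generic member via deformation invariance of stringy Hodge numbers over all of $\cym_n$ is asserted without proof and cannot apply to the strictly log canonical boundary members, for which stringy invariants are not defined. The odd-$n$ use of Batyrev--Borisov stringy duality for the self-dual polytope is a reasonable idea and parallels how the paper exploits self-duality in Section~\ref{sec:h11}, and your lattice-point count of $R(F)_{qd_n}$ is correct as a computation of $\dim H^n_{\mathrm{prim}}(X)=\mu$ --- it just is not the group the theorem is about.
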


Recall that $\mu$ is the Milnor number of the singularity
\eqref{eq:brieskorn-pham}.

\subsection{Hodge numbers}
\label{sec:hodge-numbers}

Recall the definition of $N_p$ from~\eqref{eq:N_p}. Then
$\sum_{p=0}^n N_p = \mu$.
Let $\dotcym_n\subset \cym_n$ be the open subset over which the varieties
$X\setminus D$ are smooth.

\begin{theorem}[Theorem~\ref{mainthm:hodge-numbers}]
  \label{thm:hodge-numbers}
  For $[X]\in\dotcym_n$, one has $h^{p,p}\orb(X) =
  h^{p,n-p}\orb(X) = N_p$ with one exception: if $n=2p$ is even then
  $h^{p,p}\orb(X) = 2N_{p}$. Other $h^{p,q}\orb(X)$ vanish.
\end{theorem}
\begin{proof}
  As in Section~\ref{sec:sylvester-numbers}, let $a_k = d/s_k$ for
  $0\le k\le n$. Also, set $a_{n+1}=1$.
  By \cite[\S5]{esser2022calabi-yau}, $h^{p,q}\orb(X)$ is the coefficient
  of $(x^p y^q)^d$ in 
  $Q(x,y) = \sum_{\ell=0}^{d-1} Q_\ell(x,y)$, where
\[
  Q_\ell(x,y) =
  \prod_{\substack{0\leq k\le n+1\\\hskip 8pt\wt\theta_k(\ell) =0}}
  \Biggl(
  \sum_{i_k=0}^{d/a_k - 2} (xy)^{a_k i_k} 
  \Biggr) 
  \prod_{\substack{0\le k\le n+1\\\hskip 8pt\wt\theta_k(\ell) \ne 0}}
  (xy)^{d/2-a_k}\,
  \Bigl(\frac{x}{y}\Bigr)^{d\{ \ell a_k/d\}-d/2}
\]
In this formula, $\wt\theta_k(\ell)=\{\ell a_k/d\}$. For
$0\le k\le n$ the condition $\wt\theta_k(\ell)=0$ means that $s_k | \ell$,
and for $k=n+1$ that $\ell=0$.  There are two natural
sets of size $\mu$:
\begin{enumerate}
\item The $(n+1)$-tuples $(i_0,\dotsc, i_n)$ of integers with $0\le i_k \le
  s_k-2$, 
\item integers $\ell$ with $0\le \ell \le d-1$ such that
  $s_k \nmid \ell$  for $0\le k\le n$.
\end{enumerate}
and a natural bijective correspondence between them:
\begin{displaymath}
  (i_0,\dotsc, i_n) \quad\longleftrightarrow\quad
  \ell \text{ such that } \ell \equiv i_k + 1 \mymod{s_k}.
\end{displaymath}
Indeed, the condition $0\le i_k\le s_k-2$ is equivalent to
$i_k+1 \not\equiv 0 \mymod{s_k}$.
Now, for any collection $(i_0,\dotsc, i_n)$ in (1), there are
unique $0\le p\le n$ and $0\le i_{n+1} \le d-1$ that appear in the
identity
\begin{equation}\label{eq:Q0}
  \sum_{k=0}^n a_ki_k + i_{n+1} = pd
  \quad\iff\quad
  \sum_{k=0}^n \frac{i_k}{s_k} + \frac{i_{n+1}}{d} = p.
\end{equation}
We compute:
\begin{displaymath}
  i_{n+1}\mymod{s_k} = -a_ki_k\mymod{s_k} = i_k\mymod{s_k}
\end{displaymath}
by the Congruence relation~\eqref{eq:congruence},
which implies that $\ell = i_{n+1} + 1$. In particular,
$i_{n+1}\ne d-1$, so in fact one has $0\le i_{n+1}\le d-2 =
d/a_{n+1}-2$, as in $Q_\ell(x,y)$. For a fixed $p$,
the solutions of \eqref{eq:Q0} are in bijection with those of
\eqref{eq:N_p}, so there are $N_p$ of them.
For each element of these two sets there is a monomial in
$Q(x,y)$:

\smallskip (1) In $Q_0(x,y)$ all $\wt\theta_k(0)=0$, and a tuple
$(i_0,\dotsc, i_n)$ satisfying \eqref{eq:Q0} gives the monomial
$(x^py^p)^d$, contributing to $h^{p,p}$.

\smallskip (2) For the $\ell$ corresponding to $(i_0,\dotsc, i_n)$
from (1), all
$\wt\theta_k(\ell) \ne 0$, so $Q_\ell(x,y)$ is a single monomial
$x^A y^B$, where, using the Egyptian fraction identity~\eqref{eq:egyptian}
and \eqref{eq:Q0}, 
\begin{eqnarray*}
  A + B &=& 2\Bigl( (n+2)\frac{d}{2} - \sum_{k=0}^{n+1} a_k \Bigr) = nd
            \qquad \text{and}\\
  A &=& -\sum_{k=0}^{n+1} a_k +  \sum_{k=0}^n
  d\Bigl\{ \frac{\ell}{s_k}\Bigr\} + d\Bigl\{ \frac{\ell}{d} \Bigr\} =
  d \Bigl( -1 + \sum_{k=0}^n
    \Bigl\{ \frac{\ell}{s_k}\Bigr\} + \frac{\ell}{d}
        \Bigr) \\
  &=&  d \Bigl( -1 + \sum_{k=0}^n \frac{i_k+1}{s_k} +
      \frac{i_{n+1}+1}{d} \Bigr)
 = d(-1 + p + 1) = dp.
\end{eqnarray*}
Thus, $Q_\ell(x,y) = (x^p y^{n-p})^d$, contributing to $h^{p,n-p}$.

To summarize: we have found two sets of monomials in $Q(x,y)$,
each of size $\mu$, that
contribute to the Betti numbers. By Theorem~\ref{thm:hpq-etw}(2),
the sum of the Betti numbers is $2\mu$, so we have found them all.
Thus, the only nonzero $h^{p,q}$ lie on one of the main diagonals
$h^{p,p}$ and $h^{p,n-p}$, each diagonal contributing the values
$(N_0,N_1,\dotsc, N_n)$. If $n$ is even, these diagonals intersect at
the center, doubling the entry $h^{n/2,n/2}$.
\end{proof}

\begin{corollary}\label{cor:h11}
  For $n\ge 3$ one has $h^{1,1}\orb = h^{n-1,1}\orb = N_1 = \dim\cym_n$.
\end{corollary}

Recall that for a crepant resolution of singularities $\wX\to X$, if
one exists, one has
$h^{n-1,1}\orb(X) = h^{n-1,1}(\wX) = \dim H^1(T_\wX)$, and since
deformations of smooth Calabi-Yau varieties are unobstructed by
Bogomolov-Tian-Todorov, this is the dimension of the space of local
deformations of $\wX$.

\medskip

Because the Sylvester numbers grow doubly-exponentially, for large $n$ it
becomes impractical to compute the exact values of $N_p$. Instead, we could
 find asymptotics for $N_p/\mu$ by considering a sum
$S_n = \sum_{k=0}^n \xi_k$ of $n+1$ independent variables, with $\xi_k$
uniformly distributed on the set
$\{0, \frac1{s_k}, \frac2{s_k}, \dotsc \frac{s_k-2}{s_k}\}$. This sum
takes values in the interval $[0, n-1+\frac2{d_n}]$, which is
extremely close to $[0,n-1]$. The expectation and the variance of $S_n$ are
\begin{displaymath}
  \bE(S_n) = \frac{n-1}2 + \frac1{d_n}, \quad
  \operatorname{Var}(S_n) = \frac{n-1}{12} + \frac1{6d_n}, 
\end{displaymath}
which match the moments of the uniform sum distribution
$\sum_{k=1}^{n-1} U_k$ of $n-1$ random variables
$U_k = \operatorname{Unif}(0,1)$, the Irwin-Hall distribution of order
$n-1$, except for a vanishingly small error on the order of
$1/d_n$. This gives an estimate
  \begin{displaymath}
    \displaystyle
    \frac{N_p}{\mu} \approx \frac{A(n-1,p-1)}{(n-1)!},
    \quad\text{where}\quad
    A(n-1,p-1) = \sum_{j=0}^p (-1)^j \binom{n}{j} (p-j)^{n-1}.
  \end{displaymath}
  Numerical checks show that this approximation works exceedingly
  well. In particular, one has
\begin{equation}\label{eq:dim-cym}
  \displaystyle\dim\cym_n = N_1 \approx \frac{\mu}{(n-1)!}.
\end{equation}

\subsection{Period map and local Torelli theorem} 
\label{sec:period-map}

Let $\dotcym_n\subset \cym_n$ be the open subset over which the varieties
$X\setminus D$ are smooth.
Let $[X]\in\dotcym_n$ and $\wX$ be its
crepant resolution. Let $H= H^n\orb(X) = H^n(\wX)$ be the middle
cohomology group, together with the cup product pairing
$H\times H\to \bZ$, which is symmetric for even $n$ and skew-symmetric
for odd $n$. The period domain $\fD$ for the Calabi-Yau varieties in
$\cym_n$ is the parameter space of flags
\begin{displaymath}
  H_\bC = F^0 \supset F^1 \supset \dotsb \supset F^n \supset 0
\end{displaymath}
with the graded pieces
$\operatorname{Gr}^p H^n = F^p/F^{p+1} \simeq H^{p,n-p}$ and
$H^{p,q} = F^p \cap \overline{F^q}$, which satisfy the Hodge
identities $H^{q,p} = \overline{H^{p,q}}$.  The family
$\dot\pi_n\colon \dotcX^{(n)}\to \dotcym_n$ defines the monodromy group
$\Gamma \subset \Aut(H, \la\cdot,\cdot\ra)$ and a period map
$\Phi\colon \dotcym_n \to \fD/\Gamma$.

\begin{theorem}[Theorem~\ref{mainthm:kodaira-spencer}]
  \label{thm:kodaira-spencer}
  For $n\ge3$, at a point $[X]\in\dotcym_n$, the Kodaira-Spencer map
  $\rho\colon T_{[X]} \cym_n \to H^1\orb(T_X)=H^1(T_\wX)$ is an isomorphism.
\end{theorem}
\begin{proof}
  The orbifold cohomology of $X$ can be equivalently interpreted in
  two ways: as the cohomology of $X$ considered to be a smooth DM stack,
  or as the cohomology of a crepant resolution of singularities $\wX$
  of $X$. We take the first point of view and consider both $X$ and
  $\amb$ as smooth DM stacks.  Then the usual argument for smooth
  Calabi-Yau hypersurfaces in an ordinary projective space applies
  verbatim.  Below all sheaves and cohomology groups are on
  smooth DM stacks. 

  Let $\Def(X)$ be the space of deformations of $X$
  as a hypersurface in $\amb$. One has a canonical identification
  $T_{[X]} \Def(X) = H^0(X, N_{X/\amb})$. From the exact sequence
  \begin{displaymath}
    0 \to T_X \to T_{\amb}|_X \to N_{X/\amb} \to 0
  \end{displaymath}
  one concludes that the Kodaira-Spencer map
  $T_{[X]} \Def(X) \to H^1(T_X)$ is the connecting homomorphism
  between the cohomology groups from this sequence. Using Euler's
  exact sequence
  \begin{displaymath}
    0\to \cO \to \oplus_{k=0}^{n+1} \cO(a_k) \to
    T_{\amb} \to 0
    \quad \text{with } a_{n+1} =1
  \end{displaymath}
  and the standard vanishing theorems for the sheaves $\cO_{\amb}(d)$, and
  tracing the long exact sequences of cohomologies, for $n\ge3$ we
  obtain a short exact sequence:
  \begin{displaymath}
    H^0(T_{\amb}) \to H^0(N_{X/\amb}) \to H^1(T_X) \to 0
  \end{displaymath}
  Thus, the Kodaira-Spencer map for $\Def(X)$ is surjective, and the
  kernel is the image of $H^0(T_{\amb})$, which is the group of
  infinitesimal automorphisms of $\amb$.  As in the proof of
  Lemma~\ref{lem:normal-form}, the group $\Aut(\amb)$ is
  exactly the group that puts an equation $F_t$ of a Calabi-Yau
  hypersurface $X = (F_t=0) \subset\amb$ into the standard
  form~\eqref{eq:F_t}. Thus,
  $$T_{[X]}\cym_n = T_{[X]}\Def(X) / \im H^0(T_{\amb}).$$ This
  shows that
  the map $\rho$
  %the Kodaira-Spencer map
  for $\cym_n$ at the point $[X]$ is an isomorphism.
\end{proof}

\begin{corollary}[Local Torelli theorem] \label{cor:local-torelli}
  For $n\ge3$, at a point $[X]\in\dotcym_n$
  the period map $\Phi\colon \dotcym_n \to \fD/\Gamma$ is an immersion.
\end{corollary}
\begin{proof}
  This is well known to follow from
  Theorem~\ref{thm:kodaira-spencer} by Griffiths
  \cite[Prop. 2.16]{griffiths1970periods-of-integrals} (see also
  \cite[Thm. 10.4]{voisin2007hodge-theory}) which says that the
  differential $d\Phi$ of the period map is the composition of the
  Kodaira-Spencer map with the map
  \begin{displaymath}
    \kappa\colon H^1(T_\wX) \longrightarrow
    \oplus_{p=0}^n \Hom\bigl(H^{n-p,p}(\wX), H^{n-p-1,p+1}(\wX)\bigr),
  \end{displaymath}
  given by the cup-product and the interior product, and for a
  Calabi-Yau variety the $p=0$ component of $\kappa$ is an
  isomorphism.
\end{proof}

\begin{question}
  Does the global Torelli theorem hold for the family
  $\dotcX^{(n)}\to \dotcym_n$? In other words, is the period map an embedding?
\end{question}

\begin{remark}
  For $n=2$, $\dotcym_2$ is the moduli space of smooth $U\oplus E_8$-polarized
  K3 surfaces and has dimension~$10$, while $\dim H^1(T_X)=20$. The
  global Torelli theorem does hold, if the period domain is taken to be the
  period domain of $U\oplus E_8$-polarized K3 surfaces. 
\end{remark}

\bibliographystyle{amsalpha}
%\bibliography{va}

\def\cprime{$'$}
\providecommand{\bysame}{\leavevmode\hbox to3em{\hrulefill}\thinspace}
\providecommand{\MR}{\relax\ifhmode\unskip\space\fi MR }
% \MRhref is called by the amsart/book/proc definition of \MR.
\providecommand{\MRhref}[2]{%
  \href{http://www.ams.org/mathscinet-getitem?mr=#1}{#2}
}
\providecommand{\href}[2]{#2}

\end{document}